\documentclass{amsart}

\usepackage{amsthm}
\usepackage{amsmath,amssymb}
\usepackage{scalerel}
\usepackage{graphicx}
\usepackage{titlesec}
\usepackage{esint}
\usepackage{calrsfs}
\newtheorem{theorem}{Theorem}

\newtheorem{lem}[theorem]{Lemma}
\newtheorem{definition}[theorem]{Definition}

\numberwithin{theorem}{section}

\allowdisplaybreaks

\newcommand{\be} {\begin{equation}}
\newcommand{\ee} {\end{equation}}
\newcommand{\bea} {\begin{eqnarray}}
\newcommand{\eea} {\end{eqnarray}}
\newcommand{\Bea} {\begin{eqnarray*}}
\newcommand{\Eea} {\end{eqnarray*}}

\makeatletter
\def\specialsection{\@startsection{section}{1}%
  \z@{\linespacing\@plus\linespacing}{.5\linespacing}%
  {\normalfont\centering}% DELETED
  {\normalfont}}% NEW
\def\section{\@startsection{section}{1}%
  \z@{.7\linespacing\@plus\linespacing}{.5\linespacing}%
  {\normalfont\scshape\centering}}% DELETED
  {\normalfont\scshape}% NEW
\makeatother

\numberwithin{equation}{section}

\begin{document}

%\title[Boundary differentiability for strong solutions of elliptic problems on convex domains]
%{Boundary differentiability for strong solutions of elliptic problems on convex domains}
\title[Boundary differentiability etc]
{Boundary differentiability of solutions to elliptic equations in convex domains in the borderline case}
\author[  Dharmendr \;Kumar]
{   Dharmendra\;Kumar  }
%{DHARMENDRA KUMAR}
 \address{K.\,Dharmendra \hfill\break
 Tata Institute of Fundamental Research, Centre For Applicable Mathematics, Post Bag No 6503, GKVK Post Office, Sharada Nagar, Chikkabommsandra, Bangalore 560065, Karnataka, India}
\email{dharmendra2020@tifrbng.res.in, dharamsambey90@gmail.com} 
 
\date{\today}
\thanks{Submitted \today.  Published - - - - -.}
 \subjclass[2010]{Primary  35J25 }
 \keywords{}
\begin{abstract}
In this work, we consider the following  elliptic partial differential equations:
\begin{equation*}
\left\{
\begin{aligned}{}
  - b_{ij} \; \frac{\partial^{2} w}{\partial x_{i} \partial x_{j}}    &=  g
 \;\;\; \text{in} \;\; \Omega, \\
  w &= 0 \;\;\;\text{on} \;\partial \Omega, 
\end{aligned}
\right.
\end{equation*}
\noindent where the domain $\Omega \subset \mathbb{R}^{n}$ is convex, the matrix $\big(b_{ij}\big)_{n \times n}$  satisfies the uniform ellipticity conditions. For $g$ in the scaling critical  Lorentz space  $ L(n,\; 1)(\Omega)$,  we establish boundary differentiability of solutions  to the above problem.  We also prove  $C^{\mathrm{Log-Lip}}$ regularity estimate at a boundary point  in the case when  $g \in L^{n}(\Omega)$.

\vspace{4mm}

\end{abstract}

\maketitle
\tableofcontents
\section{Introduction}

\noindent  In this work, we consider the following elliptic PDEs:

\begin{equation}\label{20MAR}
\left\{
\begin{aligned}{}
  - b_{ij} \; \frac{\partial^{2} w}{\partial x_{i} \partial x_{j}}    &=  g
 \;\;\; \text{in} \;\; \Omega, \\
  w &= 0 \;\;\;\text{on} \;\partial \Omega, 
\end{aligned}
\right.
\end{equation}

\noindent where the domain $\Omega \subset \mathbb{R}^{n}$, the matrix $\big(b_{ij}\big)_{n \times n}$   is symmetric and 

\begin{equation}\label{20MAR1}
\lambda I \; \leq \; \big(b_{ij}\big)_{n \times n} \; \leq \; \frac{1}{\lambda} I,\;\; \text{for some constant}\; 0 < \lambda  \leq 1.
\end{equation}

\vspace{2mm}

\noindent The aim of this paper is to establish boundary differentiability of strong  solution to \eqref{20MAR} for $g$ in the scaling critical Lorentz space  $L(n,\;1)(\Omega)$.

%\vspace{2mm}

%\noindent Here
%$g \in L(n,\; 1)(\Omega).$ 
%let us notice that 
%$$L^{p} \; \subset\; L(n,\;1)\; \subset\; L^{n}\;\;\;\mathrm{for \; every}\; p > n.$$
%

% This follows from merging the following two basic analysis:
%
%\vspace{1mm}
%
%\noindent - \;\; Krylov and Safanov \cite{KrylovandSafanov1, KrylovandSafanov1} and  
%
%
%\vspace{1mm}
%
%\noindent - \;\; Fabes and Stroock \cite{FabesandStroock}: who established suitable reverse H\"{o}lder inequalities for the fundamental solutions to certain linear elliptic equations.

\vspace{2mm}

\noindent To put our results in the right perspective, we begin with a classical result of  Stein\cite{SteinTheorem} on the  following `` limiting  '' case of Sobolev embedding theorem.

\vspace{2mm}

\begin{theorem}\label{SteinTheorem}
Let $L(n,\;1)$ denote the standard Lorentz space. Then for each Sobolev function $w \in W^{1,1},$ the following implication holds: 
$$\nabla w \;\in \;L(n,\;1)\;\;\Longrightarrow\;\; w \; \mathrm{is \; continuous}.$$
\end{theorem}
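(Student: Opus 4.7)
The plan is to combine a pointwise Riesz-potential control of the oscillation of $w$ with the sharp Lorentz H\"older inequality, and to exploit absolute continuity of the $L(n,1)$ norm. First, I would invoke the standard pointwise representation valid at every Lebesgue point $x$ of $w\in W^{1,1}$,
\[
|w(x)-w_{B_r(x)}| \;\leq\; C_n \int_{B_r(x)} \frac{|\nabla w(z)|}{|x-z|^{n-1}}\,dz,
\]
obtained by writing $w(x)-w(y)$ as a line integral of $\nabla w$ on the segment $[x,y]$, then averaging over $y\in B_r(x)$ and passing to the Lebesgue limit. Next, I would apply the Lorentz H\"older inequality
\[
\int |fg|\,dz \;\leq\; C\,\|f\|_{L(n,1)}\,\|g\|_{L(n',\infty)}, \qquad n' = \tfrac{n}{n-1},
\]
with $f = |\nabla w|\,\chi_{B_r(x)}$ and $g(z) = |x-z|^{-(n-1)}\chi_{B_r(x)}(z)$. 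A direct distribution-function computation shows that $\|g\|_{L(n',\infty)}$ is a purely dimensional constant, independent of $r$. Combining these two steps yields
\[
|w(x)-w_{B_r(x)}| \;\leq\; C\,\bigl\||\nabla w|\,\chi_{B_r(x)}\bigr\|_{L(n,1)},
\]
whose right-hand side tends to $0$ as $r\to 0$ by absolute continuity of the $L(n,1)$ norm (which holds because the second Lorentz index is finite, namely equal to $1$).

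To conclude, for two Lebesgue points $x,y$ with $|x-y|=r$, I would split
\[
|w(x)-w(y)| \leq |w(x)-w_{B_{2r}(x)}| + |w_{B_{2r}(x)} - w_{B_{2r}(y)}| + |w_{B_{2r}(y)}-w(y)|,
\]
estimate the middle term by an analogous Riesz potential over $B_{3r}(x)$ using the same Lorentz pairing, and deduce that $|w(x)-w(y)|\to 0$ as $r\to 0$ with a quantitative modulus of continuity. Modifying $w$ on the negligible set of non-Lebesgue points then delivers a continuous representative, which is the content of the theorem.

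The main obstacle, and the reason the result is sharp, is the Lorentz H\"older step: the Riesz kernel $|z|^{-(n-1)}$ lies in weak $L^{n'}$ but not in $L^{n'}$ itself, so the na\"ive H\"older pairing with $\nabla w\in L^n$ fails, which is precisely why mere $L^n$ integrability of the gradient only produces BMO regularity. It is the strict refinement from $L^n$ to $L(n,1)$, coupled with the duality $L(n,1)$–$L^{n',\infty}$, that closes the estimate. The technical heart of the argument is therefore the rearrangement-based verification of this Lorentz H\"older inequality together with the absolute continuity of the $L(n,1)$ norm; both are standard once the Lorentz-space machinery is set up, but they are the step that makes the borderline continuity possible.
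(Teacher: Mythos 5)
The paper does not prove this theorem: it is quoted as a classical result of Stein (reference \cite{SteinTheorem}) and used only as motivation, so there is no paper argument to compare against. Your proof, however, is correct and complete in outline. The chain of estimates is the standard modern presentation of Stein's theorem: the pointwise potential bound $|w(x)-w_{B_r(x)}|\le C_n\int_{B_r(x)}|\nabla w(z)|\,|x-z|^{1-n}\,dz$, the Lorentz pairing $L(n,1)\times L(n',\infty)\to L^1$, the computation that the truncated Riesz kernel has $L(n',\infty)$ quasi-norm equal to a dimensional constant independent of $r$, and absolute continuity of the $L(n,1)$ norm (available precisely because the second Lorentz index is finite) to make the right-hand side small uniformly in $x$. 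The three-ball chaining argument for $|w_{B_{2r}(x)}-w_{B_{2r}(y)}|$ via a common ball $B_{3r}(x)$ is also standard and closes the proof. You also identify correctly the sharpness mechanism, namely that $|z|^{1-n}\in L(n',\infty)\setminus L^{n'}$, which is why plain $L^n$ integrability of $\nabla w$ gives only BMO.

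Stein's original 1981 note reaches the same modulus-of-continuity bound, essentially $\omega(r)\lesssim\int_0^{c r^n}(\nabla w)^*(s)\,s^{1/n-1}\,ds$, by working directly with decreasing rearrangements rather than by phrasing the step as a Lorentz--H\"older duality applied to a Riesz potential; the two routes are equivalent, with your potential-theoretic framing being the one that generalizes most cleanly (and is the one implicitly used later in the paper when the authors invoke the $g^{**}$ characterization in Observation 3 of Lemma \ref{MainLemma}).
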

\noindent The Lorentz space $L(n,\;1)$ appearing in Theorem \ref{SteinTheorem} is  defined in section 3, see \eqref{26May1} next, for the precise definitions.

\vspace{1mm}

\noindent Theorem \ref{SteinTheorem} can be considered as the limiting case of Sobolev-Morrey embedding  claiming that: 
$$\nabla w \; \in \; L^{n + \varepsilon}\;\; \Longrightarrow\;\; w \;\in\; C^{0, \frac{\varepsilon}{n + \varepsilon}}.$$ 

\vspace{1mm}

\noindent Note that
$$L^{n + \varepsilon}\;\; \subset \;\; L(n,\;1)\;\;\subset\;\;  L^{n}, \;\;\;\; \forall\; \varepsilon > 0,$$
with all the inclusions being strict.
 
\vspace{1mm}

\noindent Now  Theorem \ref{SteinTheorem}  combined with the standard Calder\'{o}n-Zygmond theory has the following interesting consequence: 

\vspace{1mm}

\begin{theorem}\label{CorollaryOfSteinTheorem}
$$\Delta w \; \in \; L(n,\;1)\;\; \Longrightarrow\;\; \nabla w \; \mathrm{is \; continous.}$$
\end{theorem}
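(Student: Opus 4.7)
The plan is to derive the statement from Theorem \ref{SteinTheorem} via the Calder\'on--Zygmund theory applied to the Newtonian potential, upgraded from $L^{p}$ to the Lorentz scale. Since the hypothesis involves only the Laplacian, the idea is to promote regularity of $\Delta w$ in $L(n,1)$ into regularity of $\nabla^{2} w$ in $L(n,1)$, and then apply Stein's embedding to each component of the first--order gradient.

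First, I would reduce to the whole space via a standard cut-off, so that it suffices to work with $w$ compactly supported and $\Delta w = g \in L(n,1)(\mathbb{R}^{n})$. Representing $w = \Gamma * g$ with $\Gamma$ the fundamental solution of the Laplacian, the second derivatives $\partial_{i}\partial_{j} w$ are realized as singular integrals of Calder\'on--Zygmund type applied to $g$:
\begin{equation*}
\partial_{i}\partial_{j} w \;=\; T_{ij}\, g, \qquad T_{ij}\;\; \text{a Calder\'on--Zygmund operator}.
\end{equation*}
Since Calder\'on--Zygmund operators are bounded on $L^{p}$ for every $1<p<\infty$, real interpolation yields boundedness on the Lorentz spaces $L(p,q)$ for all $1<p<\infty$ and $1\le q\le \infty$; in particular, they are bounded on $L(n,1)$. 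Hence
\begin{equation*}
\|\nabla^{2} w\|_{L(n,1)} \;\lesssim\; \|g\|_{L(n,1)} \;=\; \|\Delta w\|_{L(n,1)}.
\end{equation*}

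With $\nabla^{2} w \in L(n,1)$ in hand, the conclusion follows componentwise: for each $i$, the function $\partial_{i} w$ belongs to $W^{1,1}_{\mathrm{loc}}$ with $\nabla(\partial_{i} w) \in L(n,1)$. Theorem \ref{SteinTheorem} then forces $\partial_{i} w$ to be continuous, so that $\nabla w$ is continuous, which is the desired statement.

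The main obstacle, or at least the only non-formal point, is the boundedness of the second--order Riesz transforms on $L(n,1)$. This is exactly the step that distinguishes the Lorentz-space statement from the classical $L^{p}$ Calder\'on--Zygmund estimate; once this mapping property is invoked (via interpolation between $L^{n-\varepsilon}$ and $L^{n+\varepsilon}$ bounds, or by a direct maximal function argument in the Lorentz scale), the rest of the argument is essentially bookkeeping combined with Theorem \ref{SteinTheorem}.
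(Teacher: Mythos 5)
Your proposal is correct and fleshes out exactly the approach the paper gestures at: the paper presents Theorem \ref{CorollaryOfSteinTheorem} as the combination of Theorem \ref{SteinTheorem} with ``standard Calder\'on--Zygmund theory,'' which is precisely your chain $\Delta w \in L(n,1) \Rightarrow \nabla^2 w = T\,\Delta w \in L(n,1)$ (by $L^{p}$-boundedness of the second-order Riesz transforms together with real interpolation to the Lorentz scale) $\Rightarrow \partial_i w$ continuous for each $i$ by Stein's theorem. No substantive difference from the paper's (sketched) argument.
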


\vspace{1mm}

\noindent To the best of our knowledge, we are not aware of analogue of Theorem \ref{CorollaryOfSteinTheorem} for general nonlinear elliptic and parabolic equations occurring in the past through a rather sophisticated and powerful nonlinear potential theory, see for instances \cite{Mingione1,Mingione2,Mingione3} and the references therein.

\vspace{1mm}

\noindent  Theorem \ref{CorollaryOfSteinTheorem} has been extended to quaslinear operators modelled on the $p-$laplacian by Kuusi and Minigione in \cite{Mingione4}. Subsequently they generalized their result to $p$-laplacian type systems in \cite{Mingione5}. See also  \cite{Mingione1,Mingione2,Mingione3} and the references therein.   After that there has been several generalization of such a result to operators with different type of nonlinearities.
 Theorem \ref{CorollaryOfSteinTheorem}   has been extended to fully nonlinear elliptic operators  \cite{TuomoKuusi, AdimurthiAndBanerjee}. Finally we also refer to a recent work \cite{BMu} where the borderline gradient continuity has been obtained  in the case of game-theoretic normalized $p$-Laplacian operator.

\vspace{2mm}

\noindent We now mention some other  works   in the case of boundary regularity that are closely related to this paper.  In 1984, Krylov in \cite{Krylov1,Krylov2} proved that solution of \eqref{20MAR} is $C^{1, \alpha}$ along the boundary under the assumption that $\partial \Omega$ is $C^{1, \alpha}$ using boundary harnack principle.  Such  $C^{1}$ results were extended to $C^{1,Dini}$ domains in  \cite{Lieberman}.   Now in case of convex domains, we see that standard barrier arguments imply that solutions are Lipschitz continuous up to the boundary. We also mention that Wang in \cite{Wang}  established similar results as in \cite{Krylov2}  by a somewhat different iterative argument based on ABP type comparison principle. In 2006, Li and Wang \cite{LiandWang1} adapted such an approach  to establish  the  boundary differentiability of the solution to \eqref{20MAR}  in convex domains. In their work, they assumed that  $g$ satisfies
\begin{equation}\label{c1}\int_{0}^{r_{0}} \frac{\|g\|_{L^{n}\big(\Omega \cap Q[\rho \times \rho]\big)}}{\rho} d\rho\;\; < \;\; \infty. \end{equation}
 For nonhomogeneous Dirichlet boundary condition, see \cite{LiandWang2}.  Also they showed that their result is  optimal in the sense that  only under the assumption that $\Omega$ is convex,  continuity of the gradient of the solution to \eqref{20MAR} along the boundary can not be expected.  They demonstrated this by providing two counterexamples.  In this note, we sharpen the result in \cite{LiandWang2}  to the case when $g \in L(n,1)$.  It is to be noted that  an arbitrary $g \in L(n,1)$ need not satisfy \eqref{c1} and thus our result is not covered by \cite{LiandWang2}. 
 
 \vspace{2mm}
 
 \noindent In closing,  we mention that  Ma and Wang in \cite{MaandWang} established the gradient continuity of solutions  up to the boundary to fully nonlinear uniformly elliptic equations on $C^{1, \mathrm{Dini}}$ domain $\Omega.$ Their result was extended   by Adimurthi and Banerjee in  \cite{AdimurthiAndBanerjee} to the borderline case when the right hand side $g \in L(n,1)$. In fact,  the main result of \cite[Theorem 1.3]{AdimurthiAndBanerjee} can also be thought of as boundary counterpart of \cite[Theorem 1.3]{TuomoKuusi}. 
 
\vspace{2mm}

\vspace{2mm} For  $g \in L^{n}(\Omega)$,  we also  establish Log-Lipschitz estimate at a boundary point  of a convex domain using  compactness arguments  which are inspired by the  works of Caffarelli  in \cite{Caffarelli1, Caffarelli2, Caffarelli3}.

 \vspace{4mm}
 
\noindent \textbf{Notations}:
 
\vspace{2mm}

\noindent $B_{r}\;:=\;$ the open ball of radius $r$ and center $0.$

\vspace{2mm}

\noindent $B_{r}(y) \;:= \;y + B_{r}. $

\vspace{2mm}

\noindent $\mathbb{R}_{+}^{n} \;:= \;\big\{(x',\;x_{n})\;\;: \;\;x' \in \mathbb{R}^{n-1},\; x_{n} > 0\big\}.$

\vspace{2mm}

\noindent $T_{r} \;:= \;B_{r}\; \cap\; \partial{\mathbb{R}_{+}^{n}}.$

\vspace{2mm}

\noindent $Q[c \times d] \;:= \; T_{c}\; \times \;(0,\;d)\; \subset\; \mathbb{R}^{n}$\; for  any real number $c > 0,\; d > 0.$

\vspace{2mm}

\noindent $\fint_{\Omega} f(x)dx :=$ Integral average of $f$ over the positive measure set $\Omega$. 

\vspace{3mm}

\noindent The organization of the paper is as follows. In section 2, we state our main result. In section 3, we shall recall some definitions and characterization of relevant function spaces that will be used in the subsequent sections. Now given t that  the blow-up of a convex set at each boundary point is cone, in section 4,  we first  examine the boundary differentiability of the solutions to \eqref{20MAR}, when $\Omega$ is a  cone.  This is starting point of the proof of Theorem \ref{Mainthm}.  In the same section, we also  establish a basic comparison lemma  that is relevant to the proof of Theorem \ref{22JANTHM} below. In section 5, we prove Theorem \ref{Mainthm}. Furthermore  by compactness arguments, we also prove  Theorem \ref{22JANTHM}.

\section{Main results} 

\noindent Main results of this paper are the following:

\begin{theorem}\label{Mainthm} Assume that  $\Omega$ is convex and $g \in L(n,\;1)(\Omega).$  Let $w$ be a strong  solution of \eqref{20MAR} for $q \in (n - n_{0},\; n),$
where $n_{0}$ is a small universal constant depends only on $n$ and ellipticity constants.  Then $w$ is differentiable at each $x \in \partial \Omega.$

\end{theorem}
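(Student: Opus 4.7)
The plan is to establish differentiability at a fixed boundary point $x_{0}\in\partial\Omega$ by a compactness-plus-iteration scheme that produces a single linear profile $L(x)=a^{\star}x_{n}$ approximating $w$ at scale $r$ with error $o(r)$; the critical analytic input is the absolute continuity of the scale-critical Lorentz norm $\|g\|_{L(n,1)(\Omega\cap B_{r}(x_{0}))}$ as $r\to 0^{+}$.

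First I would translate $x_{0}$ to the origin and rotate coordinates so that a supporting hyperplane of $\Omega$ at $0$ is $\{x_{n}=0\}$, hence $\Omega\subset\{x_{n}>0\}$ in a neighborhood of the origin. Standard convex-domain barriers together with the zero boundary data give a Lipschitz estimate $|w(x)|\leq C\,x_{n}$ on $\Omega\cap B_{r_{0}}$ (as in \cite{LiandWang1,LiandWang2}). Introduce the modulated oscillation
$$M(r)\;:=\;\inf_{a\in\R}\;\sup_{x\in\Omega\cap B_{r}}\;\frac{|w(x)-a\,x_{n}|}{r},$$
attained by some near-minimizer $a_{r}$. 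Differentiability of $w$ at $0$ with $\nabla w(0)=a^{\star}e_{n}$ is equivalent to $M(r)\to 0$ together with $a_{r}\to a^{\star}$ as $r\to 0^{+}$.

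The heart of the argument is an improvement-of-flatness inequality: there exist universal constants $\theta\in(0,1)$ and $C>0$ such that
$$M(r/2)\;\leq\;(1-\theta)\,M(r)\;+\;C\,\|g\|_{L(n,1)(\Omega\cap B_{r})}\qquad\text{for every }r\leq r_{0}.$$
I would prove this by contradiction/compactness in the spirit of Caffarelli. Assuming failure along $r_{j}\to 0$, rescale
$$\tilde w_{j}(x)\;=\;\frac{w(r_{j}x)-a_{r_{j}}\,r_{j}\,x_{n}}{M(r_{j})\,r_{j}},\qquad x\in\Omega_{j}:=r_{j}^{-1}\Omega,$$
so that $\tilde w_{j}$ solves a uniformly elliptic equation on $\Omega_{j}\cap B_{1}$ with zero boundary data and right-hand side whose $L(n,1)$-norm is $\|g\|_{L(n,1)(\Omega\cap B_{r_{j}})}/M(r_{j})\to 0$, the vanishing being forced by the contradiction setup and the scale invariance of $L(n,1)$. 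Since $\Omega$ is convex, $\Omega_{j}\cap B_{1}$ Hausdorff-converges to $K\cap B_{1}$, the (convex) tangent cone of $\Omega$ at $0$. Uniform boundary Hölder estimates on the convex domains $\Omega_{j}$, combined with the Calderón--Zygmund-type estimates for the range $q\in(n-n_{0},n)$ in the statement, extract a subsequential limit $\tilde w_{\infty}$ solving a homogeneous uniformly elliptic equation on $K\cap B_{1}$ with zero boundary data. The cone analysis carried out in Section 4 identifies the admissible linear profiles on $K$ and forces a strict decay of flatness for $\tilde w_{\infty}$, contradicting the assumed failure.

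Iterating this estimate at dyadic scales $r_{k}=2^{-k}r_{0}$ produces
$$M(r_{k})\;\leq\;(1-\theta)^{k}M(r_{0})\;+\;C\sum_{j=0}^{k}(1-\theta)^{k-j}\,\|g\|_{L(n,1)(\Omega\cap B_{r_{j}})},$$
and absolute continuity of the Lorentz norm yields $\|g\|_{L(n,1)(\Omega\cap B_{r})}\to 0$, so the geometric-weighted tail tends to $0$ and hence $M(r_{k})\to 0$. The bound $|a_{r_{k+1}}-a_{r_{k}}|\leq C\,M(r_{k})$ makes $\{a_{r_{k}}\}$ Cauchy with limit $a^{\star}$, and routine interpolation between dyadic scales upgrades the discrete information to $\sup_{\Omega\cap B_{r}}|w(x)-a^{\star}x_{n}|=o(r)$, which is exactly the differentiability of $w$ at $0$. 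The principal obstacle is the compactness step: proving uniform boundary regularity on the rescaled convex domains $\Omega_{j}$, which are merely convex with no quantified smoothness beyond a supporting hyperplane at the origin, so that one can pass cleanly to the tangent-cone limit where the Section 4 analysis applies.
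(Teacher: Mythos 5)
There is a genuine gap, and it sits precisely at the point where the scale-critical Lorentz assumption actually has to earn its keep.

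Your iteration produces $M(r_k)\le (1-\theta)^{k}M(r_0)+C\sum_{j\le k}(1-\theta)^{k-j}\|g\|_{L(n,1)(\Omega\cap B_{r_j})}$, and you then assert that $M(r_k)\to 0$ together with $|a_{r_{k+1}}-a_{r_k}|\le C\,M(r_k)$ makes $\{a_{r_k}\}$ Cauchy. That implication is false: a null sequence need not be summable, and Cauchyness of $\{a_{r_k}\}$ requires $\sum_k M(r_k)<\infty$, not merely $M(r_k)\to 0$. Tracing the iteration, $\sum_k M(r_k)\lesssim \theta^{-1}\sum_j\|g\|_{L(n,1)(\Omega\cap B_{r_j})}$, and this last sum can diverge even for $g\in L(n,1)$. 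For instance $g(x)=|x|^{-1}\big(\log(1/|x|)\big)^{-2}$ belongs to $L(n,1)$, yet $\|g\|_{L(n,1)(B_{2^{-k}})}\asymp k^{-1}$, so $\sum_k\|g\|_{L(n,1)(B_{r_k})}=\infty$. Your argument therefore cannot conclude convergence of the slopes, i.e.\ it does not prove differentiability. Incidentally, notice that if absolute continuity of the relevant norm were really all that was needed, the same scheme would give differentiability already for $g\in L^{n}$, which is known to be false (Log--Lipschitz is optimal there).

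The underlying issue is that the error term in the improvement-of-flatness step should be $\|g\|_{L^{n}(\Omega\cap B_r)}$ (or $\|g\|_{L^{q}}$), coming from ABP, not $\|g\|_{L(n,1)(\Omega\cap B_r)}$. With that correction, the iteration produces a Dini-type quantity: $\sum_k M(r_k)\lesssim \int_0^{r_0}\frac{\|g\|_{L^{n}(\Omega\cap B_\rho)}}{\rho}\,d\rho$, and the whole point of the Lorentz hypothesis $g\in L(n,1)$ is to guarantee that this Dini integral is finite and vanishes with $r_0$; this is what the paper establishes through the rearrangement bound $\int_0^r \|g\|_{L^q(B_\rho)}\rho^{-1}\,d\rho\lesssim \int_0^{|B_r|}\big[g^{**}(\varrho)\varrho^{q/n}\big]^{1/q}\frac{d\varrho}{\varrho}$ (Observations 2--3 in Section~4). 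Without identifying and using that Dini control, the argument is not complete. You should also be aware that the paper's proof of this theorem is not compactness-based at all: Lemma~4.2 runs an explicit two-sided barrier iteration valid for merely measurable $b_{ij}$, and the proof of the theorem splits into the half-space tangent-cone case (iterated comparison with auxiliary solutions on shrinking cylinders $Q[r_j\times r_j]$, tracked through the auxiliary function $\psi$) and the proper-cone case, which reduces to the Li--Wang argument. A compactness route would additionally have to deal with the fact that $b_{ij}(r_jx)$ does not converge for bounded measurable coefficients, so the limit object only satisfies Pucci extremal inequalities rather than a fixed linear equation; that is manageable via Krylov-type boundary $C^{1,\alpha}$ estimates for the extremal operators, but it is a step you would need to address explicitly.
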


\begin{theorem}\label{22JANTHM}
Assume that $\Omega$ is convex and $g \in L^{n}(\Omega)$. Let $w$ be a strong   solution of \eqref{20MAR}, then $w$ is Log-Lipschitz at any 
$x \in \partial \Omega.$
\end{theorem}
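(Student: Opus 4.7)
The plan is to run a Caffarelli-style compactness and iteration argument: at each dyadic scale $\mu^k$ we produce an affine approximation $a_k x_n$ to $w$ whose slopes $a_k$ grow at most linearly in $k$; summing then yields the Log-Lipschitz estimate since $k\sim\log(1/\mu^k)$. After translating and rotating, we assume the boundary point is $0$ and that $\{x_n=0\}$ is a supporting hyperplane of $\Omega$, so that $\Omega\cap B_1\subset\{x_n>0\}$. Standard barrier arguments in convex domains give the baseline Lipschitz control $|w(x)|\le C|x|$ near $0$, and we normalize so that $\|w\|_{L^\infty(\Omega\cap B_1)}\le 1$.

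The heart of the argument is the following one-step approximation: there exist universal constants $\mu\in(0,1/2)$ and $C_0>0$, depending only on $n$ and $\lambda$, such that any such normalized $w$ admits a slope $a\in\mathbb{R}$ with $|a|\le C_0$ and
\begin{equation*}
\sup_{\Omega\cap B_\mu}|w(x)-a\,x_n|\;\le\;\mu\bigl(\mu+C_0\|g\|_{L^n(\Omega)}\bigr).
\end{equation*}
I would prove this by contradiction: failure supplies sequences of convex domains $\Omega_k\subset\{x_n>0\}$ with $0\in\partial\Omega_k$, coefficient matrices $b^k_{ij}$ satisfying \eqref{20MAR1}, solutions $w_k$, and data $g_k$ with $\|g_k\|_{L^n}\to 0$. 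By Hausdorff compactness for convex sets, uniform ellipticity, and standard compactness for solutions of uniformly elliptic equations with vanishing $L^n$ right-hand side, a subsequence of $w_k$ converges locally uniformly to a function $w_\infty$ solving a homogeneous constant-coefficient elliptic equation on a convex cone $K_\infty\subset\{x_n\ge 0\}$ with $0\in\partial K_\infty$ and $w_\infty=0$ on $\partial K_\infty$. The cone analysis of Section 4, combined with the flatness of the supporting hyperplane $\{x_n=0\}\subset\partial K_\infty$, then supplies a linear tangent $a_\infty x_n$ to $w_\infty$ at $0$, contradicting the hypothesized failure.

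Given the approximation step, I would iterate on dyadic scales by rescaling $\tilde w(y):=\mu^{-k}w(\mu^k y)$, which solves the equation on $\mu^{-k}\Omega$ with right-hand side $\tilde g(y):=\mu^k g(\mu^k y)$; the scale invariance $\|\tilde g\|_{L^n(B_1)}=\|g\|_{L^n(B_{\mu^k})}\le\|g\|_{L^n(\Omega)}$ of the $L^n$ norm in dimension $n$ keeps the approximation lemma applicable at every scale. Setting $E_k:=\mu^{-k}\sup_{\Omega\cap B_{\mu^k}}|w(x)-a_k x_n|$, one obtains the recursion $E_{k+1}\le \mu E_k + C_0\|g\|_{L^n(\Omega\cap B_{\mu^k})}$, which by a geometric-series estimate gives $E_k\le C$ uniformly in $k$. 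The corresponding slope increments then satisfy $|a_{k+1}-a_k|\le C_0 E_k\le C$, hence $|a_k|\le Ck$; consequently $|w(x)|\le|a_k|\,|x_n|+\mu^k E_k\lesssim |x|\log(1/|x|)$ for $|x|\asymp\mu^k$, which is the Log-Lipschitz estimate.

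The main obstacle will be the compactness step: the Hausdorff limit of the convex domains $\Omega_k$ may be a proper convex cone with a corner at $0$ rather than a half-space, so one must invoke the cone analysis of Section 4 (rather than any standard flat-boundary theory) to guarantee an affine tangent at $0$ in the direction normal to the supporting hyperplane. A further delicate point is that the $L^n$ norm of $g$ is only scale invariant, not scale contractive, so the approximation lemma does not produce a geometric gain but only bounded additive increments in the slopes at each dyadic step; it is precisely the summation of these constant increments that generates a logarithmic, rather than H\"older, modulus of continuity at the boundary point.
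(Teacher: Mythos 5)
Your overall strategy matches the paper's: a Caffarelli-type dyadic iteration producing affine approximations $a_k x_n$ with $|a_k|\lesssim k$, so that the sum of bounded slope increments yields a Log-Lipschitz modulus. The paper, however, gets the one-step approximation not by compactness but by a direct ABP comparison with a homogeneous replacement (its Lemma in Section~4), after which the boundary differentiability result of \cite[Theorem 1.1]{LiandWang1} supplies the linear tangent; the insight that the scale-invariant $L^n$ norm gives only additive, not geometric, gains and hence a logarithm is common to both. That said, two points in your version need repair.

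First, you assert a one-step lemma with constants $\mu$ and $C_0$ depending only on $n$ and $\lambda$. This cannot hold. The modulus of boundary differentiability of a homogeneous solution on a convex domain at a flat boundary point is \emph{not} universal---it depends on how quickly the convex graph bounding $\Omega$ flattens near the point. In the contradiction argument you describe, the Hausdorff limit of adversarially chosen convex domains $\Omega_k$ can be a convex set whose boundary flattens arbitrarily slowly at $0$; the limit $w_\infty$ is then differentiable at $0$, but with no decay rate that has kicked in at the \emph{fixed} scale $\mu$, so no contradiction is produced. The paper avoids this by fixing $\Omega$ and its homogeneous solution $v$ first, invoking \cite{LiandWang1}, and only \emph{then} choosing the scale $\lambda$ small enough so that $|v-ax_n|\le|x|/2$ on $B_\lambda$; the resulting constants depend on $\Omega$ and on $w$ itself, which is all Theorem~\ref{22JANTHM} claims.

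Second, the recursion $E_{k+1}\le\mu E_k+C_0\|g\|_{L^n(\Omega\cap B_{\mu^k})}$ presupposes that the one-step lemma applies to the rescaled iterate $\hat w_k(y):=\mu^{-k}\bigl(w(\mu^k y)-a_k\mu^k y_n\bigr)$. But $\hat w_k$ does \emph{not} vanish on $\partial(\mu^{-k}\Omega)$: since $w=0$ on $\partial\Omega$, one has $\hat w_k=-a_k y_n$ there, and $y_n$ on the rescaled lateral boundary is of size comparable to the flattening rate of $\partial\Omega$ at scale $\mu^k$, which for a general convex domain can decay as slowly as $1/k$; combined with $|a_k|\sim k$ this lateral datum need not tend to zero. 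Your one-step lemma is stated for solutions with zero Dirichlet data, so it cannot be applied to $\hat w_k$ as written. The paper's own induction tacitly treats the rescaled iterate as if it had zero boundary data, so this difficulty is present there too; a complete argument needs a version of the approximation lemma that accommodates an affine lateral boundary term (in the spirit of \cite{LiandWang2}) together with a verification that this contribution stays controlled through the iteration.
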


\section{Preliminary results}

\noindent In this section, we shall recall some definitions and characterization of relevant function spaces that will be used in the subsequent sections.

\vspace{2mm}
%
%\noindent Let us recall definition of cone.

\begin{definition}
A set $X \subset \mathbb{R}^{n}$ is called a cone if the following two conditions are satisfied: 
\vspace{1mm}

$\mathrm{(i)}$ $X + X \subset X$ 

\vspace{1mm}

$\mathrm{(ii)}$  $\mathbb{R}_{+} \cdot X \subset X.$ 

\end{definition}
\vspace{2mm}

%\begin{definition}
%A continuous function $w$ in $\Omega$ is a viscosity subsolution $($resp. supersolution$)$ of \eqref{20MAR} if for any $x_{0} \in \Omega$ and any function $\varphi \in C^{2}(\Omega)$ such that $w - \varphi$ has a local maximum  $($resp.     
%minimum$)$ at $x_{0}$ implies that 
%$$   - b_{ij}\; \frac{\partial^{2} \varphi(x_{0})}{\partial x_{i} \partial x_{j}}\; \leq\; g(x_{0})\;\;\;\bigg(\mathrm{resp.}\; - b_{ij}\; \frac{\partial^{2} \varphi(x_{0})}{\partial x_{i} \partial x_{j}}\; \geq\; g(x_{0}) \bigg).$$
%
%
%
%
%\end{definition}
%
%\vspace{2mm}
%
%\begin{definition}
%Let $g \in L_{loc}^{p}$ for some $p > \frac{n}{2}.$  A continuous function $w$ is an $W^{2,p}$- viscosity subsolution $($resp. supersolution$)$ of \eqref{20MAR} if $\varphi \in W^{2, p}\big(B_{r}(x_{0})\big)$ with $B_{r}(x_{0}) \subset \Omega$ and for each $\varepsilon > 0$ satisfying the following:
%$$ - b_{ij}\; \frac{\partial^{2} \varphi(x)}{\partial x_{i} \partial x_{j}}\; \geq\; g(x) + \varepsilon )\;\;\;\;\mathrm{almost \; everywhere\; in }\; B_{r}(x_{0} $$
%$$ \bigg(- b_{ij}\; \frac{\partial^{2} \varphi(x)}{\partial x_{i} \partial x_{j}}\; \leq\; g(x) - \varepsilon\;\;\;\;\mathrm{almost \; everywhere\; in }\; B_{r}(x_{0})\bigg),$$
%implies that $w - \varphi$ cannot attain a local maximum   $($minimum$)$ at $x_{0}$.
% 
%\vspace{1mm}
%
%\noindent Moreover, the function $w$ is called $W^{2,p}$- viscosity solution if $w$ is both a subsolution and supersolution.
%
%\end{definition}

\vspace{2mm}

\noindent Relevant Function Spaces: 

\vspace{2mm}

\noindent
Let us also recall some basic maximal-type characterization of Lorentz spaces, see for instantce \cite{Grafakos, TuomoKuusi}.
 
\vspace{2mm}

\noindent Let $\Omega \; \subseteq \; \mathbb{R}^{n}$ be an open set and $1  \leq p  < \infty, \; 0  < q \leq \infty$.

\vspace{2mm}

\noindent The Lorentz space $L(p, \; q)(\Omega)$ is defined by prescribing all those measurable functions $f$ satisfying 
\begin{equation}\label{26May1}
\int_{0}^{\infty}\; \bigg(\lambda^{p} \big|\{x \in \Omega\;\;:\;\; |f(x)| > \lambda\}\big|\bigg)^{q/p}\;\frac{d\lambda}{\lambda}\; < \; \infty\;\;\;\mathrm{when}\; q < \infty;
\end{equation}
and 
\begin{equation}
\sup_{\lambda > 0}\;\lambda^{p} \;\big|\{x \in \Omega\;\;:\;\; |f(x)| > \lambda\}\big| \; \leq \; \infty,\;\;\;\mathrm{when}\; q = \infty.
\end{equation}

\noindent We denote $L(p, \; \infty)(\Omega) \equiv \mathcal{M}^{p}(\Omega).$ This is  known as Marcinkiewicz space. 
 Using above definition, it follows that 
\begin{equation}
f \in L(p,\; q)\; \; \Longrightarrow \;\; |f|^{r} \in L\big(p/r,\; q/r \big)\;\; \mathrm{for}\;\; r \leq p.
\end{equation}

\vspace{2mm}

\noindent First  we introduce the non-increasing rearrangement $$f^{*} : [0,\; \infty) \longrightarrow \; [0,\; \infty)$$ of a measurable function $f$ defined by
$$f^{*}(s) \; := \; \sup\Big\{t \geq 0\;\; :\;\; \big|\{x \in \mathbb{R}^{n}\;\;: |f(x)| > t\}\big| > s\Big\}.$$ And set 
$$f^{**}(\varrho) \;:=\; \varrho^{-1}\; \int_{0}^{\varrho}f^{*}(t)dt.$$

\vspace{2mm}

\noindent For $p > 1, \; q > 0$, maximal-type characterization of $L(p,\;q)$  claims that 
$$f \in L(p,\;q) \;\;\;\Longleftrightarrow\;\; \int_{0}^{\infty}\;\Big(f^{**}(\varrho)\varrho^{1/p}\big)^{q}\;\frac{d\varrho}{\varrho} < \infty.$$

\vspace{2mm}

\noindent $\mathrm{We\ also  \; us \; fix \; an \; exponent}\; q \in (n - n_{0},\; n).$ 
Here $n_{0}$ is a small universal constant depends only on $n$ and ellipticity constants such that the generalized maximum principle as  in  \cite{Escauriaza,CaffarelliCrandallKocanSwiech} holds for all $q \in (n-n_0, n]$.

\section{Some useful lemmas}

\noindent Since the blow-up of a convex set at any boundary point is cone, we should first consider Theorem \ref{Mainthm}  in the case when the domain is a cone.

\vspace{1mm}

\noindent Assume that $X \subset \mathbb{R}_{+}^{n}$ is an open cone with nonempty interior. For $X \neq \mathbb{R}_{+}^{n}$, we define 

\begin{equation}\label{26Mar}
\nu = \inf\big\{\; r > 0 \; : \; (e_{n} + T_{r}) \cap \partial X \neq \emptyset\big\}.
\end{equation}

\noindent Then it is easy to see that $\nu < \infty.$ Let $r_{0} > 0$ and $w$ be a solution of the following elliptic PDEs:
\begin{equation}\label{26MAR1}
\left\{
\begin{aligned}{}
  - b_{ij} \; \frac{\partial^{2} w}{\partial x_{i} \partial x_{j}}    &=  g
 \;\;\; \text{in} \;\; X \cap Q[r_{0} \times r_{0}], \\
  w &= 0 \;\;\;\text{on} \;\partial X \cap Q[r_{0} \times r_{0}]. 
\end{aligned}
\right.
\end{equation}

%
%\section{Some useful  Lemma}
%
%\noindent In this section, we will prove the following lemma: 
% which correspond to the two different cases:
%
%\vspace{2mm}
%
%(i)  $X \neq \mathbb{R}_{+}^{n}$ 
%
%(ii) $X = \mathbb{R}_{+}^{n}$.

\begin{lem}\label{MainLemma}
If $X = \mathbb{R}_{+}^{n},$ then $\exists$ positive constants $C, \beta$ and $\Lambda$ depending only on $\lambda, n$ and $\nu$ such that 

\begin{equation}\label{27Mar1}
\begin{aligned}{}
|w(x) - a x_{n}| \leq C &\bigg(\frac{r^{\beta}}{r_{0}^{1 + \beta}} \|w\|_{L^{\infty}\big(X \cap Q[r_{0} \times r_{0}]\big)} + \frac{r^{\beta}}{r_{0}^{\beta}} \|g\|_{L^{q}\big(X \cap Q[r_{0} \times r_{0}]\big)} \\
 & + \; r^{\beta} \int_{r}^{r_{0}} \frac{\|g\|_{L^{q}\big(X \cap Q[\rho \times \rho]\big)}}{\rho^{1 + \beta}} d\rho + \int_{0}^{\Lambda r} \frac{\|g\|_{L^{q}\big(X \cap Q[\rho \times \rho]\big)}}{\rho} d\rho +  \|g\|_{L^{q}\big(X \cap Q[\Lambda r \times \Lambda r]\big)}\bigg)r,
\end{aligned}
\end{equation}
for any $x \in Q[r \times r] \cap X$ and $r \leq \frac{r_{0}}{\Lambda}.$
\end{lem}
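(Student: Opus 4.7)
The plan is to iterate a $C^{1,\beta}$ boundary approximation lemma at dyadic scales emanating from $0\in T_{r_0}$, constructing a sequence of linear functions $a_k x_n$ whose limit furnishes the normal derivative $a$. Since the boundary portion $T_{r_0}\subset\partial\mathbb{R}_+^n$ is flat and $w\big|_{T_{r_0}}=0$, solutions of the homogeneous equation $-b_{ij}\partial_{ij}v=0$ with zero Dirichlet data on $T_1$ are $C^{1,\beta_0}$ at the origin for a universal $\beta_0>0$ by Krylov's boundary $C^{1,\alpha}$ theory. Combining this with the Escauriaza refinement of ABP (valid for $q\in(n-n_0,n]$ via \cite{Escauriaza,CaffarelliCrandallKocanSwiech}), a standard compactness/contradiction argument in the style of Caffarelli yields the following one-step lemma: there exist universal $\beta\in(0,\beta_0)$, $\gamma\in(0,1/2)$ and $C_0>0$ such that whenever $w$ solves \eqref{26MAR1} on $Q[1\times 1]\cap\mathbb{R}_+^n$, one may find $a\in\mathbb{R}$ with $|a|\leq C_0(\|w\|_{L^\infty}+\|g\|_{L^q})$ satisfying
\[
\sup_{Q[\gamma\times\gamma]\cap\mathbb{R}_+^n}|w-a x_n|\;\leq\;\gamma^{1+\beta}\|w\|_{L^\infty(Q[1\times 1]\cap\mathbb{R}_+^n)}+C_0\|g\|_{L^q(Q[1\times 1]\cap\mathbb{R}_+^n)}.
\]

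Next I would iterate this lemma at dyadic radii $r_k:=\gamma^k r_0$. Rescaling \eqref{26MAR1} from $Q[r_k\times r_k]$ to the unit cube and applying the one-step lemma at each level yields a sequence $\{a_k\}_{k\geq 0}$ with $a_0=0$ and
\[
\sup_{Q[r_k\times r_k]\cap\mathbb{R}_+^n}|w-a_k x_n|\;\leq\;r_k E_k,\qquad |a_{k+1}-a_k|\leq C E_k,
\]
where $E_k$ aggregates the geometrically decaying top-scale error $(r_k/r_0)^\beta\|w\|_{L^\infty}/r_0$ and a weighted sum of $\|g\|_{L^q(Q[r_j\times r_j]\cap\mathbb{R}_+^n)}$ for $j\leq k$. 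The factor $\gamma^\beta$ and summability of the $L^q$-contributions make $\{a_k\}$ Cauchy, so $a:=\lim_{k\to\infty}a_k$ exists. Comparing dyadic sums with integrals, the tail $\sum_{j\geq k}|a_{j+1}-a_j|$ is dominated by $\int_0^{\Lambda r}\|g\|_{L^q(Q[\rho\times\rho])}\,\rho^{-1}\,d\rho$, while the weighted partial sum accumulating contributions from scales $\rho>r$ is dominated by $r^\beta\int_r^{r_0}\|g\|_{L^q(Q[\rho\times\rho])}\,\rho^{-1-\beta}\,d\rho$, matching precisely the two integrals in \eqref{27Mar1}.

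Finally, for arbitrary $r\leq r_0/\Lambda$ and $x\in Q[r\times r]\cap\mathbb{R}_+^n$, I would pick the integer $k$ with $r_{k+1}\leq r<r_k$ and write
\[
|w(x)-a x_n|\;\leq\;|w(x)-a_k x_n|+|a_k-a||x_n|,
\]
estimating the first term by the scale-$r_k$ approximation (contributing the $\|g\|_{L^q(Q[\Lambda r\times\Lambda r])}$ residual at the current scale) and the second by the Cauchy tail; collecting also the top-scale boundary terms $(r/r_0)^\beta\|w\|_{L^\infty}/r_0$ and $(r/r_0)^\beta\|g\|_{L^q(Q[r_0\times r_0])}/r_0$ yields \eqref{27Mar1}. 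The main obstacle is the one-step approximation lemma: extending it to the subcritical integrability $q<n$ requires leaning on the Escauriaza--Caffarelli--Crandall--Kocan--Swiech maximum principle and a careful compactness-plus-contradiction argument to show that solutions with sufficiently small $L^\infty$ and $L^q$ data are close in $L^\infty$ to a linear multiple of $x_n$; once this is secured with truly universal constants, the iteration and conversion of dyadic sums to Dini-type integrals are mechanical.
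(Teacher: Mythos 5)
Your proposal correctly identifies the overall scaffold the paper uses — a one-step improvement lemma at a unit scale, geometric iteration of it down to scale $r$, conversion of the dyadic sums of $L^q$-norms into the two Dini-type integrals appearing in \eqref{27Mar1}, and a Cauchy-sequence argument for the limiting slope $a$. However, the engine driving the one-step lemma is genuinely different. The paper proves its one-step improvement (Claim~II) by a fully constructive barrier argument: it writes down the explicit polynomial supersolution $\varphi$ in \eqref{30Mar4} and subsolution $\Phi$ in \eqref{3APR2}, applies the generalized ABP/maximum principle of Escauriaza and Caffarelli--Crandall--Kocan--\'Swi\c{e}ch to $w - Mx_n - c\varphi$ and $\mathcal{M}\Phi - u$, and then uses the Harnack inequality at the interior point $\sigma e_n$ to pass the information along the set $\Sigma$. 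Crucially, the paper tracks the pair $(m, M)$ of lower and upper slopes, recording separately the decay of the oscillation $M - m$ via $(1-\eta)$ in \eqref{9APRIL1} and the drift of the center $M + m$ in \eqref{9APRIL2}; the latter is what eventually produces the convergent series for the limiting coefficient $b$. Your proposal replaces this with a compactness-and-contradiction argument that leans on Krylov's boundary $C^{1,\alpha}$ theory for the homogeneous equation and produces a single approximating slope $a$ directly, with $\sup_{Q[\gamma\times\gamma]}|w-ax_n| \le \gamma^{1+\beta}\|w\|_{L^\infty} + C_0\|g\|_{L^q}$. This is a legitimate and arguably cleaner route, but it is less elementary than the paper's: it requires uniform boundary $C^\alpha$ estimates (to extract the compact limit up to the flat boundary), stability of the equation under simultaneous convergence of coefficients and $L^q$ data, and Krylov's $C^{1,\alpha}$ boundary regularity as a black box — none of which the paper invokes. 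What the compactness route buys you is not having to construct explicit barriers and not having to do the two-sided $(m,M,c)$ bookkeeping; what it costs is quantitative control over the constants and a slightly heavier set of prerequisites.

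One point you should spell out more carefully, though it affects the paper as well: when you rescale $\tilde g(y) := r_k\, g(r_k y)$ to the unit box, the $L^q$ norm transforms as $\|\tilde g\|_{L^q(Q[1\times 1])} = r_k^{1-n/q}\,\|g\|_{L^q(Q[r_k\times r_k])}$, and for $q < n$ the prefactor $r_k^{1-n/q}$ blows up as $r_k\to 0$. Your sketch writes the iteration error in terms of $\|g\|_{L^q(Q[r_j\times r_j])}$ as if this prefactor were unity, so the claimed dominance by $\int_0^{\Lambda r}\|g\|_{L^q(Q[\rho\times\rho])}\rho^{-1}\,d\rho$ and $r^\beta\int_r^{r_0}\|g\|_{L^q(Q[\rho\times\rho])}\rho^{-1-\beta}\,d\rho$ needs to be justified with the correct scaling exponent in place, or the $L^q$ norm must be replaced by a normalized (averaged) version. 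Absent that accounting, the conversion of dyadic sums to the stated integrals is not immediate.
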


\begin{proof}

\vspace{2mm}

\textbf{Claim I}\;$Normalization.$  

\vspace{2mm}

By translation, rotation and scaling, we shall assume that $$r_{0} = 1,\;\;\; \|w\|_{L^{\infty}\big(X \cap Q[1 \times 1]\big)} + \|g\|_{L^{q}\big(X \cap Q[1 \times 1]\big)} \; \leq 1.$$

\noindent Since the equation is linear, without loss of generality, we can assume that $w(x) \geq 0$ for $x \in Q[1 \times 1] \cap X$ and $f(x) \geq 0$ for $x \in Q[1 \times 1] \cap X.$

\vspace{2mm}

\noindent From the above fact, instead of establishing inequality \eqref{27Mar1}, it is enough to show
\begin{equation}\label{30Mar}
\begin{aligned}{}
|w(x)| \leq C &\bigg(r^{\beta} 
  + \; r^{\beta} \int_{r}^{1} \frac{\|g\|_{L^{q}\big(X \cap Q[\rho \times \rho]\big)}}{\rho^{1 + \beta}} d\rho + \int_{0}^{\Lambda r} \frac{\|g\|_{L^{q}\big(X \cap Q[\rho \times \rho]\big)}}{\rho} d\rho + \|g\|_{L^{q}\big(X \cap Q[\Lambda r \times \Lambda r]\big)}\bigg)r,
\end{aligned}
\end{equation}
for any $x \in Q[r \times r] \cap X$ and $r \leq \frac{1}{\Lambda}.$

\vspace{2mm}

\noindent \textbf{Claim II}\;  Next we claim that $\exists$ positive constants $\sigma, \eta (< 1), K_{1}, K_{2}$ and $K_{3}$ depending only on $\lambda$ and $n$ such that whenever
\begin{equation}\label{30Mar1}
m x_{n} - c \leq w(x) \leq M x_{n} + c \;\; \mathrm{for \;any} \;x \in Q[1 \times 1], 
\end{equation}
for some  constants $c\;(>0)$, $m$ and $M$, then this implies that $\exists$ $\tilde{m}$ and $\tilde{M}$ such that
\begin{equation}\label{30Mar2}
\tilde{m}\; x_{n} - K_{1}\|g\|_{L^{q}\big(Q[1 \times 1]\big)} \leq  w(x) \leq \tilde{K}\; x_{n} + K_{1} \|g\|_{L^{q}\big(Q[1 \times 1]\big)}\;\; \mathrm{for\; any} \;x \in Q[\sigma \times \sigma], 
\end{equation}
with 
\begin{equation}\label{9APRIL1}
\tilde{M} - \tilde{m} = (1 - \eta)(M - m) + K_{2}\;c
\end{equation}
and 
\begin{equation}\label{9APRIL2}
\big|(\tilde{M} + \tilde{m}) - (M + m)\big| \leq \eta (M - m) + K_{3}\;c.
\end{equation}

\noindent Proof of the claim: 

\vspace{2mm}

\noindent Define $K := \sqrt{n - 1}\bigg(1 + \frac{2 \sqrt{n - 1}}{\lambda}\bigg),\; $ and choose $\varepsilon > 0$ such that 
\begin{equation}\label{30Mar3}
(1 + \varepsilon)(2 + \varepsilon)(K - 1)^{\varepsilon} \leq 4.
\end{equation}

\noindent Let 
\begin{equation}\label{30Mar4}
\varphi(x) = \frac{2x_{n}}{\tilde{\sigma}} - \bigg(\frac{x_{n}}{\tilde{\sigma}}\bigg)^{2} + \frac{\lambda^{2}}{2(n - 1)} \sum_{i = 1}^{n-1}\Big(\big(\big|\frac{x_{i}}{\tilde{\sigma}}\big| - 1\big)^{+}\Big)^{2 + \varepsilon},
\end{equation}
where $\tilde{\sigma} = \frac{1}{K}.$

\vspace{2mm}

\noindent Then $\varphi$ is differentiable and we have the following: 

\begin{equation}\label{30Mar5}
\left\{
\begin{aligned}{}
  - b_{ij} \; \frac{\partial^{2} \varphi}{\partial x_{i} \partial x_{j}}    &\geq  0
 \;\;\; \text{in} \;\;  Q[1 \times \tilde{\sigma}], \\
  \varphi(x) &\geq  1 \;\;\;\text{if} \;\;x_{n} = \tilde{\sigma}; \; \text{or}\; |x'| = 1\;\;\text{and}\; 0 \leq x_{n} \leq \tilde{\sigma};\\
  \varphi(x) &\geq  0 \;\;\;\text{if} \;\;x_{n} = 0.
\end{aligned}
\right.
\end{equation}

\noindent From \eqref{26MAR1} and \eqref{30Mar5}, we get
\begin{equation}\label{30Mar6}
\left\{
\begin{aligned}{}
  - b_{ij} \; \frac{\partial^{2}}{\partial x_{i} \partial x_{j}}\big(w(x) - M x_{n} - c\varphi(x)\big)    &\leq  g
 \;\;\; \text{in} \;\;  Q[1 \times \tilde{\sigma}], \\
  w(x) - M x_{n} - c\varphi(x) &\leq  0 \;\;\;\text{on} \;\;\partial( Q[1 \times \tilde{\sigma}]).
\end{aligned}
\right.
\end{equation}
By the generalized maximum principle \cite{Escauriaza,CaffarelliCrandallKocanSwiech} for $q \in (n - n_{0},\; n),$
where $n_{0}$ is a small universal constant depends only on $n$ and ellipticity constants, we have
\begin{equation}\label{30Mar7}
 w(x) - M x_{n} - c\varphi(x) \leq C \|g\|_{L^{q}(X \cap Q[1 \times \tilde{\sigma}])},\;\; \mathrm{for \; each}\; x \in  Q[1 \times \tilde{\sigma}],
\end{equation}
with a constant $C = C(\lambda, n) > 0.$

\vspace{2mm}

\noindent Since $\varphi(x) \leq \frac{2 x_{n}}{\tilde{\sigma}} = 2 K x_{n}$ for each 
$x \in Q[\tilde{\sigma} \times \tilde{\sigma}],$ this implies that
\begin{equation}\label{30Mar8}
w(x) \leq (M + 2Kc)x_{n} + C \|g\|_{L^{q}(Q[1 \times 1])} \;\; \mathrm{in}\; Q[\tilde{\sigma} \times \tilde{\sigma}].
\end{equation}
\noindent By the same arguments as \eqref{30Mar8}, we have 
\begin{equation*}\label{9APRIL3}
 (m - 2Kc)x_{n} - C \|g\|_{L^{q}(Q[1 \times 1])} \leq w(x) \;\; \mathrm{in}\; Q[\tilde{\sigma} \times \tilde{\sigma}],
\end{equation*}
by considering $mx_{n} - c\varphi(x) - w(x)$ instead of $ w(x) - M x_{n} - c\varphi(x)$. 

\vspace{2mm}

\noindent Hence, we have the following estimate in $Q[\tilde{\sigma} \times \tilde{\sigma}]$
\begin{equation}\label{9APRIL3}
 (m - 2Kc)x_{n} - C \|g\|_{L^{q}(Q[1 \times 1])} \leq w(x) \leq (M + 2Kc)x_{n} + C \|g\|_{L^{q}(Q[1 \times 1])}.
\end{equation}
\noindent We define 
$$ \sigma = \frac{\tilde{\sigma}}{2K} \;\; \mathrm{and}\;\; \Sigma = \big\{\sigma e_{n} + T_{\sigma K}\big\}.$$

\vspace{2mm}

\noindent We shall divide the remaining proof into two cases: $$w(\sigma e_{n}) \geq \frac{1}{2}(M + m)\sigma\;\; \mathrm{and}\;\;w(\sigma e_{n}) < \frac{1}{2}(M + m)\sigma.$$

\vspace{2mm}

\noindent Let us consider the case: $w(\sigma e_{n}) \geq \frac{1}{2}(M + m)\sigma.$
 We define 
 \begin{equation}\label{10APRIL1}
 u(x):= w(x) - (m - 2Kc)x_{n} + C \|g\|_{L^{q}(Q[1 \times 1])}.
 \end{equation}
\noindent From \eqref{9APRIL3}, it is easy to see that 
 \begin{equation}\label{10APRIL2}
u(x) \geq 0, \;\; \mathrm{for \; each}\; x \in Q[\tilde{\sigma} \times \tilde{\sigma}].
 \end{equation}
Then
 \begin{equation}\label{10APRIL3}
u(\sigma e_{n}) \geq \frac{1}{2} (M - m + 4Kc)\sigma + C \|g\|_{L^{q}(Q[1 \times 1])}. 
 \end{equation}
From the Harnack inequality, we conclude that
\begin{equation*}
\inf_{\Sigma} u(x) \geq  \frac{1}{2\tilde{C}}(M - m + 4Kc)\sigma +  C\|g\|_{L^{q}(X \cap Q[1 \times 1)},
\end{equation*}
with constant $\tilde{C} = \tilde{C}(\lambda, n).$

\noindent Set 
$$ \mathcal{M} = \Bigg(\frac{1}{2\tilde{C}}(M - m + 4Kc)\sigma + C \|g\|_{L^{q}(X \cap Q[1 \times 1])}\Bigg)^{+}. $$
Hence it follows that 
\begin{equation}\label{10APRIL4}
\inf_{\Sigma} u(x) \geq \mathcal{M}.
\end{equation}

\noindent Now we consider the following barriers
\begin{equation}\label{3APR2}
\Phi(x) = \frac{1}{2} \Bigg(\frac{x_{n}}{\sigma} + \Big(\frac{x_{n}}{\sigma}\Big)^{2}\Bigg) - \frac{\lambda^{2}}{4(n - 1)} \sum_{i = 1}^{n - 1} \Bigg(\Big(\big|\frac{x_{i}}{\sigma}\big| - 1\Big)^{+}\Bigg)^{2 + \varepsilon}.
\end{equation}
\noindent Then $\Phi$ is differentiable with the followings:
\begin{equation*}
\left\{
\begin{aligned}{}
  - b_{ij} \; \frac{\partial^{2} \Phi}{\partial x_{i} \partial x_{j}}    &\leq  0
 \;\;\; \text{in} \;\;  Q[K \sigma \times \sigma], \\
  \Phi(x) &\leq  1 \;\;\;\text{if} \;\;x_{n} = \sigma;\\
  \Phi(x) &\leq  0 \;\;\;\text{if} \;\;x_{n} = 0; \; \text{or}\; |x'| = \frac{\tilde{\sigma}}{2} = K \sigma\;\;\text{and}\; 0 \leq x_{n} \leq \sigma.
\end{aligned}
\right.
\end{equation*}
\noindent From \eqref{26MAR1}, \eqref{10APRIL2} and \eqref{10APRIL4}, we see that
\begin{equation*}
\left\{
\begin{aligned}{}
  - b_{ij} \; \frac{\partial^{2}}{\partial x_{i} \partial x_{j}}\big(\mathcal{M}\Phi - u \big)    &\leq  g
 \;\;\; \text{in} \;\;  Q[K\sigma \times \sigma], \\
  \mathcal{M}\Phi - u &\leq  0 \;\;\;\text{on} \;\;\partial( Q[K\sigma \times \sigma]).
\end{aligned}
\right.
\end{equation*}
\noindent By the generalized maximum principle \cite{Escauriaza,CaffarelliCrandallKocanSwiech} for $q \in (n - n_{0},\; n),$ we have
\begin{equation*}
\mathcal{M} \Phi(x) - u(x)  \leq C \|g\|_{L^{q}(X \cap Q[K\sigma \times \sigma])},\;\; \mathrm{for \; each}\; x \in  Q[K\sigma \times \sigma],
\end{equation*}
with a constant $C = C(\lambda, n, \nu).$

\noindent Furthermore making use of $\Phi(x) \geq \frac{1}{2\sigma} x_{n}$ in $Q[\sigma \times \sigma],$ we get 
\begin{equation}\label{3APR3}
\frac{\mathcal{M}}{2\sigma} x_{n} - u(x) \leq C \|g\|_{L^{q}(X \cap Q[K\sigma \times \sigma])},\;\; \mathrm{for \; each}\; x \in Q[\sigma \times \sigma].
\end{equation}

\noindent Thus using \eqref{10APRIL1} and the second inequality of \eqref{9APRIL3}, we get \eqref{30Mar2}, \eqref{9APRIL1} and \eqref{9APRIL2}.

\vspace{2mm}

\noindent \textbf{Claim III:}

\vspace{2mm}

We will show that  if $l \geq 0$ and 
$$mx_{n} - c \leq w(x) \leq Mx_{n} + c, \;\; \mathrm{for \; each}\; x \in  Q[\sigma^{l} \times \sigma^{l}], $$
\noindent for some  constants $c > 0, M$ and $m$, then $\exists$ constants $\tilde{m}$ and $\tilde{M}$ such that the following bound holds: 
\begin{equation}\label{3APR4}
\tilde{m} x_{n} - K_{1} \sigma^{l}\|g\|_{L^{q}( Q[\sigma^{l} \times \sigma^{l}])} \leq w(x) \leq \tilde{M} x_{n} + K_{1} \sigma^{l}\|g\|_{L^{q}( Q[\sigma^{l} \times \sigma^{l}])}, 
\end{equation}
$ \mathrm{for \; each}\; x \in  Q[\sigma^{l + 1} \times \sigma^{l + 1}],$ where 
\begin{equation}\label{10APRIL5}
\tilde{M} - \tilde{m} = (1 - \eta)(M - m) + K_{2}\frac{c}{\sigma^{m}},
\end{equation}
and
\begin{equation}\label{10APRIL6}
\big|(\tilde{M} + \tilde{m}) - (M + m)\big| \leq \eta(M - m) + K_{3}\frac{c}{\sigma^{m}},
\end{equation}
for some constants $\eta, \sigma, K_{1}, K_{2}$ and $K_{3}$ are as in  \textbf{Claim II}.
\vspace{2mm}

\noindent Proof of the claim: 

\vspace{2mm}
\noindent Set $$x = \sigma^{l} y,\;\;  a_{ij}(y) = b_{ij}(\sigma^{l}y), \;\;u(y) = \frac{w(\sigma^{l}y)}{\sigma^{l}} \;\; \mathrm{and}\;\;  f(y) = \sigma^{l}g(\sigma^{l}y), $$
for each $y \in Q[1 \times 1].$

\vspace{2mm}

\noindent From Claim II, we have the following estimate in $Q[\sigma \times \sigma]$: 
\begin{equation*}
\tilde{m} y_{n} - K_{1}\|f\|_{L^{q}(X \cap Q[1 \times 1])} \leq  u(y) \leq \tilde{M}y_{n} + K_{1} \|f\|_{L^{q}(X \cap Q[1 \times 1])}.
\end{equation*}
\noindent It is easy to see that above inequality is equivalent to \eqref{3APR4}. Also  $\tilde{M} - \tilde{m}$ and $\big|(\tilde{M} + \tilde{m}) - (M + m)\big|$ satisfy  \eqref{10APRIL5} and  \eqref{10APRIL6}.

\vspace{2mm}

\noindent \textbf{Proof of \eqref{30Mar}}: 

$$\mathrm{Set}\; \; g_{r} := \|g\|_{L^{q}(X \cap Q[r \times r])}, \;\;\;\mathrm{for}\; 0 < r \leq 1.$$

\vspace{2mm}

\noindent Starting from $-1 \leq w \leq 1$ in $ Q[1 \times 1]$ and using Claim III repeatedly, we get the following estimate  for $l = 1,2,\cdot\cdot\cdot$ \;in $Q[\sigma^{l} \times \sigma^{l}]:$
\begin{equation}\label{10APRIL7}
m_{l}x_{n} - K_{1}\sigma^{l-1}g_{\sigma^{l-1}} \leq w(x) \leq M_{l}x_{n} + K_{1}\sigma^{l-1}g_{\sigma^{l-1}}, 
\end{equation}

\noindent with
$$M_{l} - m_{l} = (1 - \eta)^{l-1}K_{2} + \frac{K_{1}K_{2}}{\sigma}\sum_{i=0}^{l-2}g_{\sigma^{i}}(1 - \eta)^{l-2-i}$$
and 
$$\big|(M_{l+1} + m_{l+1}) - (M_{l} + m_{l})\big| \leq \eta(M_{l} - m_{l}) + \frac{K_{1}K_{3}f_{\sigma^{l-1}}}{\sigma}.$$

\noindent Let us suppose that $1 - \eta = \sigma^{\gamma}.$ \;Since $g_{1} \leq 1,$ we have
\begin{equation}\label{10APRIL8}
M_{l} - m_{l} \;\leq \;C (\sigma^{l-1})^{\gamma}\;\Bigg(1 + \int_{\sigma^{l-1}}^{1}\;\frac{g_{r}}{r^{1 + \gamma}}\Bigg)
\end{equation}
and 
\begin{equation*}
\begin{aligned}
\sum_{j=l}^{\infty} \big|(M_{j + 1} + m_{j + 1}) - (M_{j} + m_{j})\big| \leq \;\;\;& \eta\; C \sum_{j=l}^{\infty} (\sigma^{l-1})^{\gamma}\;\Bigg(1 + \int_{\sigma^{l-1}}^{1}\;\frac{g_{r}}{r^{1 + \gamma}}\Bigg)\\
&\;\;\;\;\; + \;\;\frac{K_{1}K_{3}}{\sigma} \;\sum_{j=l}^{\infty} g_{\sigma^{l-1}}.
\end{aligned}
\end{equation*}

\vspace{2mm}

\noindent  Set $G_{r} := \int_{r}^{1}\frac{g_{t}}{t^{1 + \gamma}}dt.$ 

\vspace{2mm}

\noindent \textbf{Observation 1:}
\begin{equation*}
\begin{aligned}
\sum_{j=l}^{\infty}(\sigma^{l-1})^{\gamma}\int_{\sigma^{l-1}}^{1}\frac{g_{r}}{r^{1 + \gamma}}dr &= \sum_{j = l-1}^{\infty}\;(\sigma^{l})^{\gamma}\;G_{\sigma^{l}}\;\frac{\sigma^{l} - \sigma^{l + 1}}{\sigma^{l} - \sigma^{l + 1}}\\
& = \frac{1}{(1 - \sigma)\sigma^{\gamma}} \sum_{j = l-1}^{\infty}\; \int_{\sigma^{l+1}}^{\sigma^{l}}r^{\gamma - 1}G_{r}dr\\
& = \frac{1}{(1 - \sigma)\sigma^{\gamma}} \Bigg( \int_{0}^{\sigma^{l-1}}\frac{g_{r}}{r}dr\; + \; (\sigma^{l-1})^{\gamma}\int_{\sigma^{l-1}}^{1}\frac{g_{r}}{r^{1 + \gamma}}dr \Bigg).
\end{aligned}
\end{equation*}

\vspace{2mm}

\noindent \textbf{Observation 2:}
\begin{equation*}
\begin{aligned}
\sum_{j=l}^{\infty}\; g_{\sigma^{l-1}}\; &\leq\; \frac{1}{1 - \sigma}\; \int_{0}^{\sigma^{l-2}}\;\frac{g_{r}}{r}\;dr \\
& \leq \; \frac{1}{1 - \sigma}\; \int_{0}^{\sigma^{l-2}}\;\frac{1}{r} \Bigg(\int_{B_{\sqrt{n}r}}|g|^{q}dx\Bigg)^{1/p}dr\\ 
& \leq \; C\; \int_{0}^{\sigma^{l-2}}\; \Bigg(\fint_{B_{\sqrt{n}r}}|g|^{q}dx\Bigg)^{1/p}dr\\
 & \leq \; C\; \int_{0}^{\sigma^{l-2}}\; \Bigg(\fint_{B_{\rho}}|g|^{q}dx\Bigg)^{1/p}d\rho\\
 &=  C \; \tilde{\mathrm{I}}_{q}^{g}\;\big(0,\;\sigma^{l-2}\;\big),
\end{aligned}
\end{equation*}
\noindent with  constant $C = C(n, \sigma)$.
\vspace{2mm}

\noindent Also 
$$\sum_{j=l}^{\infty}\;(\sigma^{j-1})^{\alpha} = \frac{(\sigma^{m-1})^{\alpha}}{1 - \sigma^{\gamma}}.$$

\vspace{2mm}

\noindent \textbf{Observation 3:}

\vspace{2mm}

\noindent 
\begin{equation}\label{17APRIL1}
\sup_{x}\;\tilde{\mathrm{I}}_{q}^{g}\;\big(x,\; r\;\big)\; \leq \; \frac{1}{|B_{1}|^{\frac{1}{n}}}\; \int_{0}^{|B_{r}|}\;\big[g^{**}(\rho)\;\rho^{\frac{q}{n}}\big]^{\frac{1}{q}}\; \frac{d\rho}{\rho}. 
\end{equation}

\noindent  Since $g \in L(n,\;1)$, then this implies that $|g|^{q} \in L(n/q, \; 1/q).$ \;So right hand side of \eqref{17APRIL1} is finite. Also since right hand side of \eqref{17APRIL1} tends to zero as $r \rightarrow 0,$ it follows  that 
$$\sup_{x}\;\tilde{\mathrm{I}}_{q}^{g}\;\big(x,\; r\;\big)\; \rightarrow\; 0\;\; \mathrm{uniformly \; with\; respect \; to}\; x \; \mathrm{as} \; r\; \rightarrow \; 0. $$

\vspace{2mm}

\noindent \textbf{Observation 4:}

\vspace{2mm}
\begin{equation}\label{17APRIL2}
\begin{aligned}
\sum_{j=l}^{\infty} \big|(M_{l + 1} + m_{l + 1}) - (M_{l} + m_{l}) \big| \leq \;C \Bigg\{(\sigma^{l-1})^{\gamma} + 
(\sigma^{l-1})^{\gamma}\; \int_{\sigma^{l-1}}^{1}\;\frac{g_{r}}{r^{1 + \gamma}} dr \; +  \int_{0}^{\sigma^{l-2}}\frac{g_{r}}{r} dr\Bigg\}.
\end{aligned}
\end{equation}
\noindent Also 
$$(\sigma^{l-1})^{\gamma}\; + \;
(\sigma^{l-1})^{\gamma}\; \int_{\sigma^{l-1}}^{1}\;\frac{g_{r}}{r^{1 + \gamma}}\; dr \; \;+  \;\int_{0}^{\sigma^{l-2}}\frac{g_{r}}{r} \;dr \; \; \longrightarrow \;0\;\;\;\mathrm{as \;}\;\; l \;\rightarrow\; \infty.
$$
\noindent Therefore, $\big(M_{l} + m_{l}\big)$ is convergent, say
$$\lim_{l \rightarrow \infty}\; \frac{\big(M_{l} + m_{l}\big)}{2}\; = \; b.$$

\noindent Now, 
\begin{equation*}
\begin{aligned}
\Bigg|\; b - \frac{\big(M_{l} + m_{l}\big)}{2}\;\Bigg|\;\; &\leq \; \sum_{j=l}^{\infty}\;
\Bigg|\;\frac{\big(M_{l + 1} + m_{l + 1}\big)}{2}\; - \; \frac{\big(M_{l} + m_{l}\big)}{2}\;\Bigg|\\
& \; \leq \;C \Bigg\{(\sigma^{l-1})^{\gamma} + 
(\sigma^{l-1})^{\gamma}\; \int_{\sigma^{l-1}}^{1}\;\frac{g_{r}}{r^{1 + \gamma}} dr \; +  \int_{0}^{\sigma^{l-2}}\frac{g_{r}}{r} dr\Bigg\}.
\end{aligned}
\end{equation*}

\noindent Thus using \eqref{10APRIL8} and \eqref{17APRIL2}, we get the following estimate in $Q[\sigma^{l} \times \sigma^{l}],$
\begin{equation*}
\begin{aligned}
\big|w(x) - b\;x_{n}\big|\; &\leq \; \Big|w(x)\; - \;\frac{\big(M_{l} + m_{l}\big)}{2}x_{n} \Big|\;\; + \;\; \Big|b - \; \;\frac{\big(M_{l} + m_{l}\big)}{2}\Big|x_{n}\\
& \leq\;\frac{\big(M_{l} - m_{l}\big)}{2}\;x_{n} \; + \; K_{1} \sigma^{l - 1}g_{\sigma^{l-1}}\; + \; \Big|b - \; \;\frac{\big(M_{l} + m_{l}\big)}{2}\Big|x_{n}\\
& \; \leq \;C \Bigg\{(\sigma^{l-1})^{\gamma} + 
(\sigma^{l-1})^{\gamma}\; \int_{\sigma^{l-1}}^{1}\;\frac{g_{r}}{r^{1 + \gamma}} dr \; +  \int_{0}^{\sigma^{l-2}}\frac{g_{r}}{r} dr\Bigg\}x_{n}\; + \; K_{1}\;\sigma^{l-1} f_{\sigma^{l-1}}.
\end{aligned}
\end{equation*}
\noindent This completes the proof of the Lemma \eqref{MainLemma}.

%$$w(x) \leq (1 - \eta)^{l} \Bigg(2K + \frac{2KB}{\sigma} \sum_{i=0}^{l-2}\frac{f_{\sigma^{i}}}{(1 - \eta)^{1 + i}}\Bigg)x_{n} + B \sigma^{l - 1}g_{\sigma^{l-1}}.$$
%\vspace{2mm}
%
%\noindent Suppose that $\sigma^{\theta} = 1 - \eta.$ For $l \geq 2,$ we have
%\begin{equation*}
%\left\{
%\begin{aligned}{}
%\sum_{i = 1}^{l-2} \frac{g_{\sigma^{i}}}{(1 - \eta)^{1 + i}} = \frac{1}{(1 - \sigma)\sigma^{2\theta}} \; \sum_{i=1}^{l-2} \frac{\sigma^{i}}{\sigma^{(i-1)(1 + \theta)}}(\sigma^{i-1} - \sigma^{i})
%\end{aligned}
%\right.
%\end{equation*}
\end{proof}
\noindent  Let us state an important approximation  lemma, which will be useful in the proof of the Theorem \ref{22JANTHM}. 

\begin{lem}
\noindent Given $\varepsilon > 0, \;\; \exists\;  \delta > 0$ such that 
$$\|g\|_{L^{n}(\Omega)} \leq \delta.$$

\noindent Then $\exists$ a function  $u \in C^{0}(B_{1})$ such that 
\begin{equation*}\label{22JAN1}
\left\{
\begin{aligned}{}
  - b_{ij} \; \frac{\partial^{2} u}{\partial x_{i} \partial x_{j}}   &=  0
 \;\;\; \text{in} \;\; \Omega \cap B_{1}, \\
  u &= 0 \;\;\;\text{on} \;\partial \Omega \cap B_{1}, 
\end{aligned}
\right.
\end{equation*}
and 
\begin{equation}\label{SAT23JAN0}
\|w - u\|_{L^{\infty} (B_{1} \cap\; \Omega )} \leq \varepsilon,
\end{equation}
 where $w$ is the strong solution of  \eqref{20MAR}.
\end{lem}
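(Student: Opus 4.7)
The plan is a standard compactness argument by contradiction, in the spirit of Caffarelli's approach. Suppose the conclusion fails for some $\varepsilon_0 > 0$. Then there exist sequences $g_k \in L^n(\Omega)$ with $\|g_k\|_{L^n(\Omega)} \to 0$ and corresponding strong solutions $w_k$ of \eqref{20MAR} (normalized via ABP so that $\|w_k\|_{L^\infty(\Omega)} \leq 1$) such that, for every $u \in C^0(B_1)$ satisfying the homogeneous problem on $\Omega \cap B_1$,
$$\|w_k - u\|_{L^\infty(B_1 \cap \Omega)} > \varepsilon_0.$$

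First I would establish uniform equicontinuity of $\{w_k\}$ on $\overline{B_1 \cap \Omega}$. Interior H\"older continuity follows from the Krylov--Safonov theorem combined with Escauriaza's generalized ABP estimate applied to $w_k$, while continuity up to $\partial \Omega \cap B_1$ is obtained by exploiting the convexity of $\Omega$: at each boundary point there is a supporting hyperplane, from which a linear barrier can be constructed; combined with ABP this yields a $k$-independent modulus of continuity at the boundary. By Arzel\`a--Ascoli, extract a subsequence (not relabeled) such that $w_k \to u$ uniformly on $\overline{B_1 \cap \Omega}$; the Dirichlet condition $u = 0$ on $\partial \Omega \cap B_1$ is inherited directly from the uniform convergence since $w_k \equiv 0$ there.

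The main obstacle is showing that the uniform limit $u$ actually satisfies the homogeneous equation: because the $b_{ij}$ are merely measurable, one cannot pass to pointwise limits in $b_{ij}\,\partial_{ij} w_k$ directly, and the natural weak formulation is not in divergence form. I would invoke the $L^p$-viscosity solution framework of Caffarelli--Crandall--Kocan--\'Swi\k{e}ch: each strong $w_k$ is also an $L^n$-viscosity solution of its equation, and this class is stable under uniform convergence of solutions together with strong $L^n$-convergence of the right-hand sides. Since $\|g_k\|_{L^n(\Omega)} \to 0$, the limit $u$ is an $L^n$-viscosity solution of $-b_{ij}\,\partial_{ij} u = 0$ on $\Omega \cap B_1$; by Escauriaza's $W^{2,q}$ theory for $q \in (n-n_0, n)$, $u$ is in fact a strong solution.

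This $u$ is a homogeneous solution uniformly approximated by $w_k$, contradicting the assumed separation $\|w_k - u\|_{L^\infty(B_1 \cap \Omega)} > \varepsilon_0$ for sufficiently large $k$. A quantitative version of $\delta$ then follows by taking the largest $\delta$ for which the argument succeeds.
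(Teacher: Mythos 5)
Your compactness-by-contradiction argument is a genuinely different (and considerably heavier) route than the paper's proof, which is a direct two-line ABP comparison. The paper simply takes $u$ to be the $b_{ij}$-harmonic replacement of $w$ in $B_1 \cap \Omega$, i.e.\ the solution of the homogeneous equation in $B_1 \cap \Omega$ with $u = w$ on $\partial B_1 \cap \Omega$ and $u = 0$ on $\partial\Omega \cap \overline{B_1}$; then $w - u$ solves $-b_{ij}\partial_{ij}(w-u) = g$ in $B_1\cap\Omega$ with zero boundary data, and the Alexandroff--Bakelman--Pucci estimate on the bounded set $B_1 \cap \Omega$ gives $\|w - u\|_{L^\infty(B_1\cap\Omega)} \le C\|g\|_{L^n(B_1\cap\Omega)}$, whence $\delta = \varepsilon/C$ works explicitly.

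Beyond being over-engineered, your proposal has a genuine gap in the normalization step. You write that the $w_k$ are \emph{``normalized via ABP so that $\|w_k\|_{L^\infty(\Omega)} \le 1$''}, but ABP in fact gives $\|w_k\|_{L^\infty(\Omega)} \le C\,\|g_k\|_{L^n(\Omega)} \to 0$, so no normalization is needed -- and the whole compactness machinery (Krylov--Safonov equicontinuity, boundary barriers, Arzel\`a--Ascoli, stability of $L^n$-viscosity solutions) collapses, since $u \equiv 0$ already satisfies the homogeneous problem and $\|w_k - 0\|_{L^\infty} \to 0$ contradicts the assumed separation immediately. Conversely, if you intended to rescale so that $\|w_k\|_{L^\infty} = 1$, then the rescaled right-hand side becomes $g_k/\|w_k\|_{L^\infty}$, whose $L^n$-norm need not tend to zero, and the argument that the limit solves the homogeneous equation breaks down. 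In short: the key quantitative fact is the ABP bound itself, and once you invoke it, the contradiction framework adds nothing. The paper exploits ABP directly by comparing $w$ to its harmonic replacement rather than to the zero function; this formulation is what survives at the rescaled levels of the later iteration, where the solution no longer vanishes on the whole boundary, so it is worth retaining even in the bounded-domain case.

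Your remark that the measurability of $b_{ij}$ obstructs passing to the limit in $b_{ij}\partial_{ij}w_k$ is a legitimate concern in a genuine compactness argument, and the Caffarelli--Crandall--Kocan--\'Swi\k{e}ch $L^p$-viscosity stability is the right remedy; but here it is unnecessary machinery, because the lemma is not really a compactness statement -- it is a pointwise ABP comparison with an explicit constant.
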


\vspace{2mm}

\begin{proof}
\noindent Let $v$ be the solution of the following equations

\begin{equation*}\label{6FEB}
\left\{
\begin{aligned}{}
  -b_{ij} \; \frac{\partial^{2} u}{\partial x_{i} \partial x_{j}}   &=  0
 \;\;\; \text{in} \;\; B_{1} \cap \Omega, \\
  v &= 0 \;\;\;\text{on} \;\partial \big(B_{1}  \cap \Omega \big)  \setminus \Omega, \\
  v &= u \;\;\;\text{on} \; \partial \big(B_{1}  \cap \Omega \big)  \cap \Omega.
\end{aligned}
\right.
\end{equation*}

\noindent Then from \eqref{20MAR}, we have

\begin{equation*}\label{7FEB1}
\left\{
\begin{aligned}{}
  -b_{ij} \; \frac{\partial^{2} (w - u)}{\partial x_{i} \partial x_{j}}   &=  g
 \;\;\; \text{in} \;\; B_{1} \cap \Omega, \\
  w-u &= 0 \;\;\;\text{on} \;\partial \big(B_{1}  \cap \Omega \big).
\end{aligned}
\right.
\end{equation*}

\noindent By using the Alexandroff - Bakelman - Pucci maximum principle \cite{Caffarelli and Cabre, Gilbarg and Trudinger}, we have

\begin{equation*}\label{7FEB2}
\|w - u\|_{L^{\infty} (B_{1} \cap\; \Omega )} \leq C \;\|g\|_{L^{n} (B_{1} \cap \;\Omega )},
\end{equation*}

\noindent where $C$ is a constant depending only on $n$.

\noindent By choosing $ \delta = \frac{\varepsilon}{C},$ we get 

\vspace{2mm}

\begin{equation*}\label{}
\|u - v\|_{L^{\infty} (B_{1} \cap\; \Omega )} \leq \varepsilon.
\end{equation*}

\end{proof}

\section{Proof of the Theorem \ref{Mainthm}}
\noindent By translation, rotation and scaling, we can assume that $0 \in \partial \Omega$. Hence we only consider the differentiability at the origin. Since $\Omega$ is convex, without loss of generality, we can assume that $\Omega \subset \mathbb{R}_{+}^{n}.$ We will consider the proof of the Theorem \ref{Mainthm} into two cases:
$$ X \neq \mathbb{R}_{+}^{n}\;\;\; \mathrm{and}\;\;\; X = \mathbb{R}_{+}^{n}.$$

\vspace{2mm}

\noindent Case I: \;\; $ X \neq \mathbb{R}_{+}^{n}.$

\vspace{2mm}

\noindent The proof of this case is unchanged as in \cite[ Setion 3, Proof of Theorem 1.1]{LiandWang1}.

\vspace{2mm}

\noindent Case II: \;\; $ X = \mathbb{R}_{+}^{n}.$

\vspace{2mm}

\noindent Let us choose positive sequence $\{r_{j}\}_{j=0}^{\infty}$. Also suppose that $w_{j}$ be the solution of the following problem: 
\begin{equation}\label{}
\left\{
\begin{aligned}{}
  - b_{ij} \; \frac{\partial^{2} w_{j}}{\partial x_{i} \partial x_{j}}    &=  g
 \;\;\; \text{in} \;\;  Q[r_{j} \times r_{j}], \\
  w_{j} &= 0 \;\;\;\text{on} \;\partial Q[r_{j} \times r_{j}] \setminus \Omega,\\ 
   w_{j} &= w \;\;\;\text{on} \;\partial Q[r_{j} \times r_{j}] \cap \Omega.
\end{aligned}
\right.
\end{equation}

\vspace{2mm}

\noindent By invoking Lemma \ref{MainLemma},\; $\exists$ constants $C, \; \Lambda, \;\beta$ depending only on $\lambda, n$ and $\exists$ constants $a_{j}$ such that 
\begin{equation}\label{12May1}
|w_{j}(x) - a_{j} x_{n}| \;\; \leq \;\; C\; G_{j}(r)r,\;\; \forall \; x \in Q [r \times r]\; \mathrm{and}\; r \leq \frac{r_{j}}{\Lambda},
\end{equation}
\noindent where 
\begin{equation}\label{12May2}
\begin{aligned}{}
G_{j}(r) =   &\frac{r^{\beta}}{r_{j}^{1 + \beta}} \|w\|_{L^{\infty}\big(X \cap Q[r_{j} \times r_{j}]\big)} + \frac{r^{\beta}}{r_{j}^{\beta}} \|g\|_{L^{q}\big(X \cap Q[r_{j} \times r_{j}]\big)} \\
 & + \; r^{\beta} \int_{r}^{r_{j}} \frac{\|g\|_{L^{q}\big(X \cap Q[\rho \times \rho]\big)}}{\rho^{1 + \beta}} d\rho + \int_{0}^{\Lambda r} \frac{\|g\|_{L^{q}\big(X \cap Q[\rho \times \rho]\big)}}{\rho} d\rho +  \|g\|_{L^{q}\big(X \cap Q[\Lambda r \times \Lambda r]\big)}.
\end{aligned}
\end{equation}

\vspace{2mm}

\noindent  By using  convexity of $\Omega$ along with the fact that $0$ is a flat point of it, $\exists$ a convex function $$L : \mathbb{R}^{n-1} \longrightarrow \mathbb{R}$$ and  $r_{0} > 0$ such that 
$$Q[r_{0} \times r_{0}] \cap \partial \Omega = \Big\{(x',\; x_{n})\;\;: \;\; x_{n} = L(x'), \; x' \in T_{r_{0}}\Big\}.$$

\vspace{2mm}

\noindent Let $r$ be any non-negative real number such that $r \leq r_{0}.$ Let us define 
$$\tilde{L}(r) = \max\bigg\{\frac{L(x')}{|x'|}\;\;:\;\; x' \in T_{r}\bigg\}.$$
\noindent With  $G_{0}(r)$ obtained from \eqref{12May1}, we define 
$$\tilde{G_{0}}(r) = \sup \big\{G_{0}(t)\;\;:\;\; 0 \leq t \leq r \big\}.$$
\noindent Finally, we define
$$\psi(r) = \max \big\{\tilde{L}(r),\; \tilde{G_{0}}(r)\big\}.$$ 

\vspace{2mm}

\noindent From the observation that if $\tilde{L}(r) = 0,$ then 
$$\partial\Big(\Omega \cap Q[\tilde{r} \times \tilde{r}]\Big) \; \subset\; \partial \mathbb{R}_{+}^{n}\;\; \mathrm{for\; some}\; \tilde{r} < r_{0}.$$
Therefore, from Lemma \ref{MainLemma}, $w$ is differentiable at $0.$ 

\vspace{2mm}

\noindent Finally, we can assume that $$\tilde{L}(r) > 0 \;\; \mathrm{in}\;\; (0,\; r_{0}).$$
Set
$$ \sigma_{j} \;=\; \frac{r_{0}\;\psi(r_{0})}{2^{j}}. $$
\vspace{2mm}

\noindent From \cite[Equation\;(3.4)]{LiandWang1}, for each $j$, the following equation 
\begin{equation}\label{13May1}
r_{j} \; \sqrt{\psi(r_{j})} = \sigma_{j}
\end{equation}
has unique solution such that 
\begin{equation}\label{13May2}
r_{j} > r_{j+1} > 0, \;\; r_{j}\; \rightarrow \; 0\;\; \mathrm{as}\; j \rightarrow \infty.
\end{equation}

\noindent By reducing $r_{0}$ suitably, without loss of generality, we can assume that $\sqrt{\psi(r_{0})} \leq \frac{1}{\Lambda}.$

\vspace{2mm}

\noindent \textbf{Claim I:} For each $j,$ the following estimate holds$:$
\begin{equation}\label{14May1}
0 \;\;\leq\;\; w_{j}(x) - w(x) \;\;\leq\; \;C \;r_{j}\; \psi(r_{j})\;\; \mathrm{for \; each}\; x \in Q[r_{j} \times r_{j}] \cap \Omega.
\end{equation}
\noindent Indeed, by applying the generalized maximum principle \cite{Escauriaza,CaffarelliCrandallKocanSwiech} for $q \in (n - n_{0},\; n),$
where $n_{0}$ is a small universal constant depends only on $n$ and ellipticity constants, we have
\begin{equation}\label{14May2}
0 \;\; \leq \;\; w(x) \;\; \leq \;\; w_{j}(x) \;\; \mathrm{for \; each}\; x \in Q[r_{j} \times r_{j}] \cap \Omega.
\end{equation}
\noindent Using above estimate \eqref{14May2} and by applying  once again the generalized maximum principle \cite{Escauriaza,CaffarelliCrandallKocanSwiech} for $q \in (n - n_{0},\; n),$ we have
\begin{equation}\label{14May3}
0 \;\; \leq \;\; w(x) \;\; \leq \;\; w_{j + 1}(x) \;\;\leq \;\; w_{j}(x)\;\; \mathrm{for \; each}\; x \in Q[r_{j + 1} \times r_{j + 1}] \cap \Omega.
\end{equation}
From \eqref{12May2} and definition of $\tilde{L},\; \tilde{G}_{0},\; \psi$, we have the following estimate for each $x \in Q[r_{j} \times r_{j}] \cap \partial \Omega$: 
\begin{equation}\label{14May4}
w_{0}(x)\; \leq \;a_{0} x_{n}\; +\; C \tilde{G_{0}}(r_{j})r_{j}\; \leq \;a_{0}r_{j}h(r_{j})\; + \;C \tilde{G_{0}}(r_{j})r_{j}\; \leq \;C \psi(r_{j})r_{j}.
\end{equation}
From \eqref{14May3} and \eqref{14May4}, we get
$$w_{j}(x) - w(x) \;\; \leq \;C \psi(r_{j})r_{j}\;\; \mathrm{on}\; Q[r_{j} \times r_{j}] \cap \partial \Omega. $$
Now, generalized maximum principle \cite{Escauriaza,CaffarelliCrandallKocanSwiech} for $q \in (n - n_{0},\; n),$ gives  
\begin{equation}\label{14May5}
w_{j}(x) - w(x) \;\; \leq \;C \psi(r_{j})r_{j}\;\; \mathrm{in}\; Q[r_{j} \times r_{j}] \cap \Omega. 
\end{equation}

\noindent Combining \eqref{14May2} and \eqref{14May5}, observation follows.

\vspace{2mm}

\noindent \textbf{Claim II:} 

\vspace{2mm}

\noindent $\exists$ a sequence $C_{j}$ such that
\begin{equation}\label{14May6}
|w_{j}(x) - a_{j}x_{n}|\;\; \leq \;\; C_{j}\sigma_{j}\;\;\mathrm{in}\; Q[r_{j} \times r_{j}] \cap \Omega, 
\end{equation}
and $C_{j} \rightarrow 0$ as $j \rightarrow \infty.$

\vspace{2mm}

\noindent Indeed, let $C_{j} = G_{j}(\sigma_{j}).$
Then \eqref{14May6} follows from \eqref{12May1}. 
 Also using similar lines of proof as in step III of \cite[Proof of Theorem 1.1]{LiandWang1}, we have 
$$C_{j} \;\rightarrow\; 0\;\; \mathrm{as}\; j \rightarrow \infty.$$

\vspace{2mm}

\noindent \textbf{Claim III:}\;\; $w$ is differentiable at $0.$

\vspace{2mm}

\noindent  Indeed, from \eqref{14May3} and $a_{j} := \frac{\partial w_{j}(0)}{\partial x_{n}},$ we see that $$a_{j} > a_{j+1},\;\;\mathrm{for}\; j = 0, 1, 2,\cdot\cdot\cdot.$$ 
So let us suppose that 
$$\lim_{j\rightarrow \infty}\; a_{j} := a.$$

\noindent Using the Claim II, Claim III and \eqref{13May1}, we get the following estimates for each $x \in Q[r_{j} \times r_{j}] \cap \Omega:$
\begin{equation}\label{15May1}
\begin{aligned}
|w(x) - a x_{n}|\;\; &\leq\;\; |w(x) - w_{j}(x)| \;+ \;|w_{j}(x) - a_{j}x_{n}| \;+ \;|a_{j}x_{n} - a x_{n}|\\
\;\;&\leq\;\; C r_{j} \psi(r_{j})\; + \; C_{j}\sigma_{j} \; + \; |a - a_{j}|\sigma_{j}\\
\;\;&=\;\; \Big(C  \sqrt{\psi(r_{j})}\; + \; C_{j} \; + \; |a - a_{j}|\Big)\sigma_{j}.
\end{aligned}
\end{equation}
 
 \noindent Let us choose $j \geq 0$ with $\sigma_{j + 1} < r \leq \sigma_{j}$ for $r \in (0,\;\sigma_{0}).$\; Set
 
 $$ C(r) := \sup\;\Big\{C \sqrt{\psi(r_{i})}\; + C_{i}\; + \; |a - a_{i}|\; : \; j \leq i\Big\}.$$

 \noindent Using \eqref{15May1} and $\frac{\sigma_{j}}{r} \leq \frac{\sigma_{j}}{\sigma_{j +1}} = 2$, finally we get the following estimates:
$$|w(x) - a x_{n}| \leq \Big(\tilde{C} \sqrt{\psi(r_{i})}\; + C_{j}\; + \; |a - a_{j}|\Big)\sigma_{j} \leq 2 C(r)r.$$
 It is easy to see that $C(r) \rightarrow 0$ as $r \rightarrow 0.$
 \noindent Hence $w$ is differentiable at $0$ with $\nabla w(0) = a e_{n}.$
This completes the proof of the lemma.

%\noindent Using the generalised maximum principle \cite{Escauriaza,CaffarelliCrandallKocanSwiech} instead of maximum principle, the same lines of proof will work as in \cite[ Setion 3, Proof of Theorem 1.1]{Li and Wang}. For the conciseness, we skip the details.

\section{Proof of the Theorem \ref{22JANTHM}}

\noindent By translation, rotation and scaling, without loss of generality, we can assume that $0 \in \partial \Omega$ and only consider the Log-Lipschitz of $w$ at $0.$

\vspace{2mm}

\noindent By \cite[Theorem 1.1]{LiandWang1}, $v$ is differentiable at the origin and $ \nabla v(0) = a\; e_{n}$ for some constant $a$. i.e., 
\begin{align*}
|v - a \;x_{n}| &\leq C(|x|)\;|x|,
\end{align*}
where $C(|x|) \rightarrow 0$ as $|x| \rightarrow 0.$

\vspace{2mm}

\noindent This implies that $\exists$ $\lambda > 0$ such that
\begin{align*}
|v - a \;x_{n}| &\leq \frac{|x|}{2},\;\;\mathrm{whenever}\; |x| \leq \lambda.
\end{align*}

\vspace{2mm}

\noindent Now, this implies that

\begin{equation}\label{SAT23JAN1}
\|v - a \; x_{n}\|_{L^{\infty}(B_{\lambda}\; \cap\; \Omega)} \leq \frac{\lambda}{2}.
\end{equation}

\noindent Estimates showed for $v$ implies that

$$\|w - a \; x_{n}\|_{L^{\infty}(B_{\lambda}\; \cap\; \Omega)} \leq \lambda.$$

\noindent Indeed,

\begin{align*}
\|w - a \;x_{n}\|_{L^{\infty}(B_{\lambda}\; \cap\; \Omega)} &= \|w - v\|_{L^{\infty}(B_{\lambda}\; \cap\; \Omega)} + \|v - a \;x_{n}\|_{L^{\infty}(B_{\lambda}\; \cap\; \Omega)}\\
& \leq \varepsilon + \frac{\lambda}{2}\;\;\;\;\;\;\;\;\Big(\text{using}\; \eqref{SAT23JAN0} \; \text{and}\; \eqref{SAT23JAN1}\Big),\\
& \leq \lambda \;\;\;\;\;\;\;\;\;\;\;\;\;\;\;\Big(\text{by choosing  $ \varepsilon = \frac{\lambda}{2}$, which in turn fixes $\delta$ }\Big).
\end{align*}

\noindent We shall establish by induction process the existence of a sequence of polynomials 

$$L_{k}(x) = k\; a\; x_{n}$$
%satisfying 
%\begin{equation}\label{SAT23JAN2}
%|L_{k}| \leq C \;k \;\lambda^{k}\;\;\; \text{for some positive constant $C$, }
%\end{equation}
such that 
\begin{equation}\label{SAT23JAN3}
\|w - L_{k} \;x\|_{L^{\infty}(B_{\lambda^{k}}\; \cap\; \Omega)} \leq \lambda^{k}.
\end{equation}

\noindent The case $k = 1$ is precisely \eqref{SAT23JAN1}. Now, suppose that we have proved the $k$-th step of induction.  

\noindent Define 

$$ u(x) = \frac{(w -  L_{k})(\lambda^{k} x)}{\lambda^{k}} \;\; \; \text{in}\; \; \Omega_{\lambda^{k}} \cap B_{1},$$

\noindent where $\Omega_{\lambda^{k}} = \{ x \in \Omega \;\;:\;\; \;\lambda^{k} \;x \in \Omega\}.$

\vspace{2mm}

\noindent Then, it is easy to verifies that $w$ is a strong solution to 

$$ -b_{ij}(\lambda^{k} x) \frac{\partial^{2} u}{\partial x_{i}  \partial x_{j}}\; =\; \lambda^{k} \;g(\lambda^{k} x) \;\;\;\mathrm{in}\;\;\Omega_{\lambda^{k}} \cap B_{1}.$$

\noindent Set $$\tilde{g}(x) \;:= \;\lambda^{k} \;g(\lambda^{k} x),$$ then it is easy to see that $\|\tilde{g}\|_{L^{n}(\Omega)} \leq \delta.$ 

\vspace{2mm}

\noindent Therefore, by induction assumption, $\exists$ an affine function 
$$\tilde{L}(x) \;=\; a\; x_{n}\;\; \;\mathrm{in}\;\; \Omega_{\lambda^{k}} \cap B_{\lambda},$$
such that 
$$\|u - \tilde{L}\|_{(\Omega_{\lambda^{k}} \cap B_{\lambda})} \leq \lambda.$$
If we define 
\begin{equation}\label{18May1}
L_{k +1}(y)\; :=\; L_{k}(y) + \lambda^{k}\;\tilde{L}(\frac{y}{\lambda^{k}})\;\;\; \mathrm{for}\;\; y \in \Omega \cap B_{\lambda^{k + 1}},
\end{equation}

\noindent Then, this implies that 
$$\|w - L_{k + 1}\|_{L^{\infty}(B_{\lambda^{k + 1}}\; \cap\; \Omega)} \leq \lambda^{k + 1}.$$

\noindent Moreover, it follows from \eqref{18May1} that 

\begin{equation}\label{19May1}
L_{k + 1}(y)\; = \; (k + 1)\;a \;y_{n}\; \;\mathrm{for}\;\; y \in \Omega \cap B_{\lambda^{k + 1}}.
\end{equation}
\noindent Finally, in view of \eqref{SAT23JAN3} and \eqref{19May1}, we get
\begin{align*}
\|w\|_{B_{\lambda^{k}}} &\leq  \|w - L_{k} \;x\|_{L^{\infty}(B_{\lambda^{k}}\; \cap\; \Omega)} + \|L_{k}\|_{L^{\infty}(B_{\lambda^{k}}\; \cap\; \Omega)} \\
& \leq \lambda^{k} + |a| \; k \; \lambda^{k}\\
& \leq C\; k\; \lambda^{k},
\end{align*}
where $C = C(a) = C(v) = C\big(\|f\|_{L^{n}(\Omega)},\;\|u\|_{L^{\infty}(\Omega)}\big)$ is a positive constant.

\section{Acknowledgement} 

The author would like to thank  Agnid Banerjee for  discussions and  suggestions concerning the  preparation of the manuscript.  The author is also grateful to TIFR CAM for the  financial support.

 \bibliographystyle{alpha}

\end{document}